\documentclass[12pt,reqno]{amsart}
\usepackage[T1]{fontenc}
\usepackage{lmodern}
\usepackage{cite,amssymb,amsfonts,amsmath,amsthm,mathrsfs,dsfont,microtype}
\usepackage[dvipsnames]{xcolor}

\usepackage[shortlabels]{enumitem}
\usepackage[a4paper, margin=2.54cm]{geometry}

\usepackage{setspace}
\usepackage{hyperref}
\hypersetup{
	colorlinks=true,
	linkcolor=blue,
	citecolor=blue,
	urlcolor=blue,
	pdfauthor={
		Sandro Coriasco,
		Alexandre Kirilov,
		Wagner A. A. de Moraes,
		Pedro M. Tokoro
	},
	pdftitle={
		Top-Degree Global Solvability for 
		Tube Complexes in Gevrey Ultradistributions
	}
}

\numberwithin{equation}{section}
\allowdisplaybreaks[2]

\theoremstyle{plain}
\newtheorem{theorem}{Theorem}[section]
\newtheorem{proposition}[theorem]{Proposition}
\newtheorem{lemma}[theorem]{Lemma}
\newtheorem{corollary}[theorem]{Corollary}

\theoremstyle{definition}
\newtheorem{definition}[theorem]{Definition}

\theoremstyle{remark}
\newtheorem{remark}[theorem]{Remark}

\newcommand{\R}{{\mathbb{R}}} 
\newcommand{\C}{{\mathbb{C}}} 
\newcommand{\Z}{{\mathbb{Z}}} 
\newcommand{\N}{{\mathbb{N}}} 
\newcommand{\T}{{\mathbb{T}}} 
\renewcommand{\L}{{\mathbb{L}}} 
\newcommand{\EE}{{\mathbb{E}}}
\newcommand{\FF}{{\mathbb{F}}}

\newcommand{\G}{{\mathscr{G}}} 
\newcommand{\D}{{\mathscr{D}}} 
\newcommand{\F}{{\mathscr{F}}} 

\newcommand{\df}{{\mathsf{\Lambda}}} 

\begin{document}
	
	\title[Top-Degree Global Solvability]{Top-Degree Global Solvability for Tube Complexes in Gevrey Ultradistributions}
	
	\author[S. Coriasco]{Sandro Coriasco}
	\address{
		Dipartimento di Matematica ``Giuseppe Peano'', 
		Università Degli Studi di Torino, 
		Via Carlo Alberto 10, CAP 10123, Torino, 
		Italia }
	\email{sandro.coriasco@unito.it}
		
	\author[A. Kirilov]{Alexandre Kirilov}
	\address{
		Departamento de Matem\'atica,
		Universidade Federal do Paran\'a,  
		Caixa Postal 19096, CEP 81531-980, Curitiba, Paran\'a,
		Brasil}
	\email{akirilov@ufpr.br}
	
	\author[W. de Moraes]{Wagner A. A. de Moraes}
	\address{
		Departamento de Matem\'atica,  
		Universidade Federal do Paran\'a  
		Caixa Postal 19096, CEP 81531-980, Curitiba, Paran\'a,
		Brasil}
	\email{wagnermoraes@ufpr.br}
	
	\author[P. Tokoro]{Pedro M. Tokoro}
	\address{
		Programa de P\'os-Gradua\c c\~ao em Matem\'atica,  
		Universidade Federal do Paran\'a,  
		Caixa Postal 19096, CEP 81531-980, Curitiba, Paran\'a,  
		Brasil}
	\email{pedro.tokoro@ufpr.br}
	
	\subjclass{Primary 35A01,46F05; Secondary 35R01, 58J10}
	
	\keywords{global solvability, Gevrey ultradistributions, tube complexes,
		involutive structures, non-compact manifolds, propagation of Gevrey regularity}


		\begin{abstract}
			Let \(s>1\), let \(M\) be a connected, non-compact, oriented
			real-analytic manifold, and let
			\(\omega_1,\ldots,\omega_m\) be real-valued closed \(1\)-forms of
			Gevrey order \(s\) on \(M\). We study the differential complex naturally
			associated with this family on \(M\times\T^m\). We prove that its
			top-degree operator is globally solvable in Roumieu Gevrey
			ultradistributions, or equivalently that the corresponding top-degree
			cohomology vanishes. No global hypoellipticity assumption and no
			arithmetic condition on the periods of the defining forms are required.
			The proof is carried out in the physical variables and combines fiber
			translations, a local normal form, and a transport formula along paths in
			the base manifold. These tools yield propagation of Gevrey regularity,
			non-confinement of Gevrey singularities, and the support control needed
			to apply an abstract solvability criterion. The result highlights a
			sharp contrast with the compact setting, where compatibility conditions
			are unavoidable and solvability for compatible data may depend on
			exponential small-denominator conditions.
		\end{abstract}

	\maketitle
	
	{
	\small  
	\setstretch{1} 
	\tableofcontents 
	}

\section{Introduction}
\label{sec:introduction}

The relation between regularity and solvability is a classical theme in
the theory of partial differential equations. At the local level,
hypoellipticity of an operator often leads, by transposition, to
solvability of its formal transpose. In a global setting, however, this
relation is more delicate: topology, behavior at infinity, support
properties, and small denominators may all affect the existence and
regularity of global solutions.

A natural setting in which these phenomena interact is that of tube
structures associated with closed differential forms. Let \(M\) be a
real-analytic manifold of dimension \(n\), fix \(s>1\), and let
\[
\omega_1,\ldots,\omega_m\in\df^1\G^s(M)
\]
be a family of real-valued closed Gevrey \(1\)-forms. On
\[
X:=M\times\T^m
\]
consider the differential operators
\begin{equation}\label{eq:intro-complex}
	\L^q
	=
	\mathrm d_t
	+
	\sum_{k=1}^m
	\omega_k\wedge\partial_{x_k},
	\qquad
	q=0,\ldots,n-1,
\end{equation}
where \(\mathrm d_t\) denotes the exterior derivative in the
\(M\)-variable. Since the forms \(\omega_k\) are closed and the
operators \(\partial_{x_k}\) commute, we have
\[
\L^{q+1}\L^q=0.
\]

Thus, \(\L=(\L^q)_{q=0}^{n-1}\) defines a differential complex.
Locally, it is generated by a system of commuting first-order vector
fields and belongs to the general framework of locally integrable and
involutive structures; see \cite{BCH_book,Treves}.

Global regularity and solvability for complexes of the form
\eqref{eq:intro-complex} have been studied extensively on compact
manifolds. For a single torus variable, Bergamasco, Cordaro, and
Malagutti \cite{BCM1993} investigated global hypoellipticity of the first
operator and exhibited the role of arithmetic properties of the defining
closed form. The corresponding top-degree solvability problem was
studied by Bergamasco, Cordaro, and Petronilho \cite{BCP1996}, using the
identification of the last operator with the transpose of the first one.
On the torus, Bergamasco and Petronilho \cite{BP1999_jmaa} considered
solvability in every degree and obtained conditions involving rational
and Liouville forms. Subsequent works developed smooth, Gevrey, and
cohomological aspects of tube structures of higher corank; see, among
others,
\cite{DM2016,DM2020,ADL2023gh,ADL2023gs,AFJR2024}.

On a compact base, unrestricted surjectivity in top degree is impossible:
the constant functions belong to the kernel of \(\L^0\) and therefore
impose compatibility conditions on the range of \(\L^{n-1}\). Even for
compatible data, solvability may depend on exponential
small-denominator conditions; see \cite{ADL2023gs}. Thus, in the compact
setting, regularity and solvability remain closely connected with the
arithmetic properties of the periods of
\[
\boldsymbol{\omega}
:=
(\omega_1,\ldots,\omega_m).
\]

The passage to a non-compact base changes the problem substantially.
Global hypoellipticity of \(\L^0\) on certain non-compact manifolds still
depends on arithmetic properties of
\(\boldsymbol{\omega}\); see \cite{CKMT}. Analogous phenomena occur in
the Gevrey setting, where the relevant small-denominator estimates must
reflect Gevrey growth; see \cite{CKMTGevrey2026}. The purpose of the
present paper is to show that top-degree global solvability is markedly
more robust.

Our main result states that, if \(M\) is connected, non-compact, and
oriented, then
\begin{equation}\label{eq:intro-main-map}
	\L^{n-1}:
	\df^{0,n-1}\D_s'(M\times\T^m)
	\longrightarrow
	\df^{0,n}\D_s'(M\times\T^m)
\end{equation}
is globally solvable in Gevrey ultradistributions. In other words, every
top-degree Gevrey ultradistribution is of the form
\(\L^{n-1}u\) for some
\[
u\in\df^{0,n-1}\D_s'(M\times\T^m).
\]
Equivalently, the top-degree cohomology of the Gevrey
ultradistribution complex vanishes.

No global hypoellipticity assumption on \(\L^0\) and no Diophantine or
exponential-Liouville condition on the periods of the forms
\(\omega_k\) is required. This is the main distinction from global
Gevrey regularity and from compatible solvability on compact bases.
The non-compactness of \(M\) provides the support properties that make
unrestricted top-degree solvability possible.

The proof combines geometric arguments in the physical variables with
an abstract functional-analytic criterion. Locally, the closed forms
\(\omega_k\) admit Gevrey potentials. Translation of the torus fibers by
these potentials conjugates the tube complex to the de Rham differential
in the base variable, giving a local normal form. A transport formula
along paths in \(M\) then shows that Gevrey regularity on one full torus
fiber propagates throughout the connected base. For compactly supported
functions, non-compactness supplies a fiber on which the function
vanishes; this yields both injectivity and non-confinement of Gevrey
singularities for \(\L^0\). The same transport mechanism also gives the
support control required for \(\L^{n-1}\)-convexity.

The functional-analytic framework is provided by
Ara\'ujo \cite{Araujo2017}: injectivity and non-confinement for the
transpose yield semiglobal solvability, while convexity with respect to
supports promotes semiglobal solvability to global solvability. Since
the transpose of \(\L^{n-1}\) is, up to sign, \(\L^0\), the geometric
properties established above give the surjectivity of
\eqref{eq:intro-main-map}.

The fiber-translation and transport arguments used here may be viewed as
a physical-space reformulation of the partial Fourier series approach to
global solvability and global hypoellipticity for tube structures; see
\cite{BCM1993,BCP1996,BP1999_jmaa,ADL2023gh,ADL2023gs,
	HZ2017,HZ2019,CKMT}. However, the proof of
our solvability theorem does not involve uniform estimates in the
Fourier frequencies. This explains why the arithmetic conditions that
govern global regularity do not enter the statement of the main result.

The paper is organized as follows. In
Section~\ref{sec:gevrey-sections}, we introduce Gevrey sections and
ultradistributions on real-analytic manifolds and recall the abstract
solvability criterion used later. In Section~\ref{sec:tube-complex}, we
define the tube complex and identify its formal transpose. In
Section~\ref{sec:global-solvability}, we establish the covariance under
fiber translations, the local normal form, and the transport formula.
We then prove propagation of Gevrey regularity, non-confinement of
Gevrey singularities, and propagation of supports, and use these results
to obtain global solvability in top degree. We conclude by comparing the
non-compact result with the arithmetic obstructions that occur on compact
bases.

	\section{Gevrey sections and ultradistributions on analytic manifolds}
	\label{sec:gevrey-sections}

Throughout the paper, all manifolds are assumed to be finite-dimensional,
Hausdorff, and second countable. In particular, they are paracompact and
admit compact exhaustions.

In this section, \(X\) denotes a real-analytic manifold of dimension \(d\),
and \(\EE\) and \(\FF\) denote real-analytic complex vector bundles over
\(X\).

We fix \(s>1\) and work with Gevrey classes of Roumieu type. Our
presentation of Gevrey functions and sections follows \cite{Rod_Gevrey},
whereas the locally convex structure of the corresponding spaces, their
duality properties, and the functional-analytic results used below are
largely based on \cite{Araujo2017}; see also the foundational works of
Komatsu \cite{Komatsu1967,Komatsu_ultra_1}.

\subsection{Local characterization of Gevrey sections} \

Let \(\mathcal U\subset\R^d\) be open, let \(K\Subset\mathcal U\) be
compact, and let \(h>0\). For \(f\in C^\infty(\mathcal U)\), set
\begin{equation}\label{eq:local-gevrey-norm}
	\|f\|_{K,h}
	:=
	\sup_{\alpha\in\N_0^d}
	h^{-|\alpha|}(\alpha!)^{-s}
	\sup_{y\in K}|\partial^\alpha f(y)|.
\end{equation}

The Roumieu Gevrey space of order \(s\) on \(\mathcal U\) is
\begin{equation}\label{eq:local-gevrey-space}
	\G^s(\mathcal U)
	:=
	\left\{
	f\in C^\infty(\mathcal U):
	\forall K\Subset\mathcal U,\ \exists h>0
	\text{ such that }\|f\|_{K,h}<\infty
	\right\}.
\end{equation}

The same notation will be used for vector-valued functions, with
\eqref{eq:local-gevrey-norm} replaced by the sum of the corresponding
componentwise norms.

We recall that \(\G^s(\mathcal U)\) is an algebra and is stable under
differentiation and composition. More precisely, if
\(\Phi:\mathcal U\to\mathcal V\) is a Gevrey map of order \(s\) and
\(f\in\G^s(\mathcal V)\), then \(f\circ\Phi\in\G^s(\mathcal U)\). 
Multiplication by a Gevrey matrix-valued function also preserves Gevrey
regularity. These properties follow from the Leibniz rule and the
Faà di Bruno formula; see \cite[Chapter~1]{Rod_Gevrey}.

Choose a countable, locally finite analytic atlas
\(\{(\Omega_i,\chi_i)\}_{i\in I}\) of \(X\). After refining it if
necessary, fix on each \(\Omega_i\) an analytic frame
\[
\{\mathrm e_{i1},\dots,\mathrm e_{iN}\}
\]
for \(\EE\), where \(N=\operatorname{rank}\EE\). Every section
\(u\in C^\infty(X;\EE)\) can be written on \(\Omega_i\) as
\begin{equation}\label{eq:local-frame-decomposition}
	u|_{\Omega_i}
	=
	\sum_{j=1}^N u_{ij}\,\mathrm e_{ij},
	\qquad
	u_{ij}\in C^\infty(\Omega_i).
\end{equation}

\begin{definition}\label{def:gevrey-section}
	A section \(u\in C^\infty(X;\EE)\) is a Gevrey section of order \(s\)
	if
	\[
	u_{ij}\circ\chi_i^{-1}
	\in\G^s\bigl(\chi_i(\Omega_i)\bigr),
	\qquad
	i\in I,\quad j=1,\dots,N.
	\]
	The space of all such sections is denoted by \(\G^s(X;\EE)\).
\end{definition}

This definition is independent of the chosen analytic atlas and local
frames. Indeed, on the intersection of two trivializing neighborhoods, the
corresponding coefficient vectors are related by analytic changes of
coordinates and analytic transition matrices, both of which preserve the
class \(\G^s\).

\begin{remark}\label{rem:s-equal-one}
	Definition~\ref{def:gevrey-section} also makes sense for \(s=1\), in
	which case it gives the space of real-analytic sections. In this work,
	however, we are concerned with the non-quasianalytic case \(s>1\).
	This assumption guarantees the existence of nontrivial compactly
	supported Gevrey sections, which will serve as test sections in the
	definition of Gevrey ultradistributions.
\end{remark}

\subsection{Compactly supported sections and their topology} \

Let \(\{K_\ell\}_{\ell\in\N_0}\) be a compact exhaustion of \(X\) such that
\begin{equation}\label{eq:compact-exhaustion}
	K_\ell\subset\operatorname{int}K_{\ell+1},
	\qquad
	X=\bigcup_{\ell\in\N_0}K_\ell.
\end{equation}

For each \(\ell\), choose a finite subset \(I_\ell\subset I\) and compact
sets \(K_{\ell,i}\Subset\Omega_i\), with \(i\in I_\ell\), such that
\[
K_\ell\subset\bigcup_{i\in I_\ell}K_{\ell,i}.
\]

For a section \(u\) written as in
\eqref{eq:local-frame-decomposition}, define
\begin{equation}\label{eq:bundle-gevrey-norm}
	\|u\|_{\ell,h,\EE}
	:=
	\sum_{i\in I_\ell}\sum_{j=1}^N
	\bigl\|u_{ij}\circ\chi_i^{-1}\bigr\|_
	{\chi_i(K_{\ell,i}),h}.
\end{equation}

Different choices of finite coverings, analytic charts, and analytic
frames may modify the parameter \(h\), but they lead to the same
locally convex topology introduced below.

For \(\ell\in\N_0\) and \(h>0\), set
\begin{equation}\label{eq:compact-step}
	\G_c^{s,h}(K_\ell;\EE)
	:=
	\left\{
	u\in C^\infty(X;\EE):
	\operatorname{supp}u\subset K_\ell,\ 
	\|u\|_{\ell,h,\EE}<\infty
	\right\}.
\end{equation}

Endowed with the norm \eqref{eq:bundle-gevrey-norm},
\(\G_c^{s,h}(K_\ell;\EE)\) is a Banach space. We define
\begin{equation}\label{eq:fixed-support-inductive-limit}
	\G_c^s(K_\ell;\EE)
	:=
	\underset{h\to+\infty}{\operatorname{ind\,lim}}\,
	\G_c^{s,h}(K_\ell;\EE)
\end{equation}
and
\begin{equation}\label{eq:global-test-space}
	\G_c^s(X;\EE)
	:=
	\underset{\ell\to+\infty}{\operatorname{ind\,lim}}\,
	\G_c^s(K_\ell;\EE).
\end{equation}

Since the positive integers are cofinal in \((0,\infty)\), the inductive
limit in \eqref{eq:fixed-support-inductive-limit} may equivalently be
taken over \(h\in\N\).

As a vector space,
\[
\G_c^s(X;\EE)
=
\bigcup_{\ell\in\N_0}\G_c^s(K_\ell;\EE).
\]
For an arbitrary compact set \(K\subset X\), we use
\(\G_c^s(K;\EE)\) to denote the space
\[
\left\{
u\in\G_c^s(X;\EE):
\operatorname{supp}u\subset K
\right\},
\]
endowed with the analogous fixed-support inductive-limit topology.

\begin{proposition}\label{prop:topology-test-sections}
	The following properties hold.
	\begin{enumerate}[(i)]
		\item
		If \(0<h<h_+\), then the inclusion
		\(\G_c^{s,h}(K_\ell;\EE) \hookrightarrow \G_c^{s,h_+}(K_\ell;\EE)\)
		is compact.
		
		\item
		For every compact set \(K\subset X\), the space
		\(\G_c^s(K;\EE)\) is a DFS space.
		
		\item
		The space \(\G_c^s(X;\EE)\) is a DFS space, and its topology is
		independent of the compact exhaustion, analytic atlas, and local
		frames used in its construction.
		
		\item
		For every compact set \(K\subset X\), the natural inclusion
		\(\G_c^s(K;\EE) \hookrightarrow \G_c^s(X;\EE)\) 
		is continuous and has closed range.
	\end{enumerate}
\end{proposition}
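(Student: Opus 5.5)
The plan is to reduce everything to the local Euclidean situation through the finitely many charts in \(I_\ell\), and then invoke standard facts about DFS spaces, namely countable inductive limits of Banach spaces with compact linking maps.

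For (i), take a bounded sequence \((u_k)\) in \(\G_c^{s,h}(K_\ell;\EE)\), say \(\|u_k\|_{\ell,h,\EE}\le 1\). Each local coefficient \(u_{k,ij}\circ\chi_i^{-1}\) then has all derivatives bounded on \(\chi_i(K_{\ell,i})\) by \(h^{|\alpha|}(\alpha!)^s\). Since the supports lie in \(K_\ell\), the sections vanish outside \(K_\ell\), so these bounds control the derivatives everywhere on the relevant compact sets. Next I will apply Arzelà--Ascoli together with a diagonal argument to extract a subsequence for which every derivative of every coefficient converges uniformly on each \(\chi_i(K_{\ell,i})\). The limit \(u\) is smooth, is supported in \(K_\ell\), and satisfies the same bound with \(h\). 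For convergence in the \(h_+\)-norm I will split the supremum over \(\alpha\). For \(|\alpha|>N\) the contribution is at most \(2(h/h_+)^{N}\), which is small for \(N\) large. For \(|\alpha|\le N\) there are only finitely many derivatives, and these converge uniformly. This gives \(\|u_k-u\|_{\ell,h_+,\EE}\to0\). The one technical point is that the charts only cover \(K_\ell\). Since the supports lie inside \(K_\ell\), the norm \eqref{eq:bundle-gevrey-norm} nevertheless controls the section completely, so no mass is lost.

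For (ii), I will first take \(K=K_\ell\). Then \(\G_c^s(K_\ell;\EE)\) is the inductive limit over \(h\in\N\) of Banach spaces whose linking maps are injective and, by (i), compact. This is precisely the definition of a DFS space, and in particular the limit is Hausdorff and regular. For an arbitrary compact \(K\), choose \(\ell\) with \(K\subset K_\ell\). Then \(\G_c^{s,h}(K;\EE)\) is a closed subspace of \(\G_c^{s,h}(K_\ell;\EE)\): the condition \(\operatorname{supp}u\subset K\) is preserved under uniform limits, because it amounts to the vanishing of \(u\) on the open set \(X\setminus K\). Restricting compact linking maps to closed subspaces keeps them compact, so \(\G_c^s(K;\EE)\) is again DFS.

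For (iii), the steps \(\G_c^s(K_\ell)\hookrightarrow\G_c^s(K_{\ell+1})\) are continuous injections between DFS spaces. I will re-index the double limit as a single countable inductive limit of Banach spaces \(\G_c^{s,\ell}(K_\ell;\EE)\). Its linking maps factor through a compact inclusion coming from (i), once I check that the inclusion \(\G_c^{s,h}(K_\ell)\to\G_c^{s,h'}(K_{\ell+1})\) is bounded for a suitable \(h'\). That boundedness follows from the Gevrey stability of analytic changes of coordinates and frames (Faà di Bruno), which costs only a constant factor in \(h\). The same comparison proves independence of the choices. Two atlases and frames give norms satisfying \(\|u\|'_{\ell,h'}\le C\|u\|_{\ell',h}\) for suitable \(\ell'\) and \(h'\). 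Two exhaustions are mutually cofinal. In each case the identity map is continuous in both directions between the two inductive limits. I expect this to be the main obstacle: the precise composition estimate \(\|f\circ\Phi\|_{K,Ch}\le C\|f\|_{\Phi(K'),h}\) for analytic \(\Phi\) must be imported from \cite{Rod_Gevrey}.

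For (iv), continuity is immediate, because each step \(\G_c^{s,h}(K;\EE)\) maps continuously into some step of the global limit. For closed range I will use that DFS spaces are regular inductive limits, so every bounded set, and in particular every convergent sequence, lies and is bounded in some single step. A DFS space is moreover sequential, so it suffices to prove sequential closedness. If \(u_k\in\G_c^s(K;\EE)\) converges to \(u\) in \(\G_c^s(X;\EE)\), the whole sequence converges in some Banach space \(\G_c^{s,h}(K_\ell;\EE)\). There the convergence is uniform, so \(\operatorname{supp}u\subset K\) persists in the limit. Hence \(u\in\G_c^s(K;\EE)\), and the range is closed.
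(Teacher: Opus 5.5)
Your proposal follows essentially the same route as the paper's much terser proof: reduction to local frames, the diagonal Arzel\`a--Ascoli argument with the tail gain $(h/h_+)^{N}$ for (i), DFS structure from injective compact linking maps for (ii) and (iii), and Gevrey invariance under analytic changes of charts and frames for independence. For (iv) you argue directly through regularity and sequentiality of DFS spaces, where the paper simply cites Ara\'ujo's Lemma~3.3. An even shorter route is available: $\G_c^s(K;\EE)$ is the common kernel of the point evaluations at points of $X\setminus K$, each of which is continuous on every step.

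One correction is needed in (iii). Passing from the level-$\ell$ covering to the level-$(\ell+1)$ covering costs a factor $C_\ell$ in $h$. So the diagonal choice $h=\ell$ need not even map $\G_c^{s,\ell}(K_\ell;\EE)$ into $\G_c^{s,\ell+1}(K_{\ell+1};\EE)$. Take instead parameters with $h_{\ell+1}\geq 2C_\ell h_\ell$. Then each linking map factors through a compact inclusion from (i), and the sequence $\G_c^{s,h_\ell}(K_\ell;\EE)$ remains cofinal.
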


\begin{proof}
	In local analytic frames, the statements reduce to the corresponding
	properties of finite products of scalar Gevrey spaces. The compactness
	in \emph{(i)} follows from the strict gain in the Gevrey parameter and
	a diagonal Arzelà--Ascoli argument. Consequently, the fixed-support
	space in \emph{(ii)} is a DFS space.
	
	For the global space, the directed family
	\[
	\left\{
	\G_c^{s,h}(K_\ell;\EE):
	\ell\in\N_0,\ h>0
	\right\}
	\]
	admits a cofinal sequence whose inclusion maps are compact. Its
	inductive limit is precisely \(\G_c^s(X;\EE)\), which proves
	\emph{(iii)}. The independence of the auxiliary choices follows from
	the invariance of the Gevrey class under analytic changes of
	coordinates and analytic transition matrices. Finally, \emph{(iv)}
	follows from the fixed-support closed-range property; see
	\cite[Lemma~3.3]{Araujo2017}.
\end{proof}

The space \(\G^s(X;\EE)\) is endowed with its usual projective local
topology. A sequence \(\{u_\nu\}\) converges to \(u\) in
\(\G^s(X;\EE)\) if and only if, for every \(\ell\in\N_0\), there exists
\(h_\ell>0\) such that
\[
\|u_\nu-u\|_{\ell,h_\ell,\EE}
\longrightarrow0
\]
and the norms on the left-hand side are finite for all sufficiently large
\(\nu\).

This characterization of convergence will suffice for our purposes. For a
more detailed account of the locally convex structure and duality theory of
Gevrey spaces, we refer to
\cite{Rod_Gevrey,Araujo2017,Komatsu1967,Komatsu_ultra_1}.

\subsection{Gevrey ultradistributions} \

Fix a positive real-analytic density \(\mu\) on \(X\), and let
\(\EE^*\) denote the dual bundle of \(\EE\). Multiplication by \(\mu\)
identifies compactly supported sections of \(\EE^*\) with compactly
supported sections of
\(\EE^*\otimes\mathcal D_X\), where \(\mathcal D_X:=|\Lambda^dT^*X|\)
is the density bundle of \(X\).

We define the space of \(\EE\)-valued Gevrey ultradistributions of order
\(s\) by
\begin{equation}\label{eq:gevrey-ultradistributions}
	\D_s'(X;\EE)
	:=
	\bigl(\G_c^s(X;\EE^*)\bigr)'_b,
\end{equation}
where the subscript \(b\) denotes the strong dual topology.

If \(u\in L_{\mathrm{loc}}^1(X;\EE)\), then \(u\) defines a Gevrey
ultradistribution \(T_u\) by
\begin{equation}\label{eq:regular-ultradistribution}
	\langle T_u,\varphi\rangle
	:=
	\int_X
	\langle u(x),\varphi(x)\rangle_x\,\mu(x),
	\qquad
	\varphi\in\G_c^s(X;\EE^*),
\end{equation}
where \(\langle\cdot,\cdot\rangle_x\) denotes the natural duality pairing
between \(\EE_x\) and \(\EE_x^*\).

\begin{proposition}\label{prop:basic-ultradistribution-properties}
	The space \(\D_s'(X;\EE)\) is an FS space. Moreover, there are
	natural continuous injections
	\begin{equation}\label{eq:natural-inclusions}
		\G^s(X;\EE)
		\hookrightarrow
		C^\infty(X;\EE)
		\hookrightarrow
		\D'(X;\EE)
		\hookrightarrow
		\D_s'(X;\EE).
	\end{equation}
	If \(\mathcal U\subset X\) is open, restriction defines a continuous
	linear map
	\[
	\D_s'(X;\EE)
	\longrightarrow
	\D_s'(\mathcal U;\EE|_{\mathcal U}).
	\]
	The support of an element of \(\D_s'(X;\EE)\) is defined in the usual
	way through these restrictions.
\end{proposition}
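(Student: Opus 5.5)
The plan is to obtain all four assertions by duality from Proposition~\ref{prop:topology-test-sections}, together with standard facts about Gevrey classes. For the FS property, I would use that \(\G_c^s(X;\EE^*)\) is a DFS space by Proposition~\ref{prop:topology-test-sections}(iii), applied to \(\EE^*\). The general theorem of Komatsu \cite{Komatsu1967} then says that the strong dual of a DFS space is an FS space, that is, a Fréchet--Schwartz space. Concretely, \(\D_s'(X;\EE)\) is the projective limit of the duals of the Banach steps \(\G_c^{s,h}(K_\ell;\EE^*)\), taken along a cofinal sequence \((\ell,h)\to\infty\). The linking maps of this limit are transposes of compact inclusions, hence compact by Schauder's theorem.

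For the chain of injections \eqref{eq:natural-inclusions}:
\begin{enumerate}[(a)]
\item The first inclusion \(\G^s(X;\EE)\hookrightarrow C^\infty(X;\EE)\) is immediate, and its continuity follows from the seminorm estimates \(\sup_K|\partial^\alpha f|\le h^{|\alpha|}(\alpha!)^s\|f\|_{K,h}\).
\item The second inclusion \(C^\infty(X;\EE)\hookrightarrow\D'(X;\EE)\) is the classical one given by \eqref{eq:regular-ultradistribution}, using the density \(\mu\).
\item For the third map, I would first check that \(\G_c^s(X;\EE^*)\hookrightarrow C_c^\infty(X;\EE^*)\) is continuous, by the same estimate on each step \(\G_c^{s,h}(K_\ell;\EE^*)\) and the universal property of inductive limits. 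Restricting a distribution to Gevrey test sections therefore gives a continuous map \(\D'(X;\EE)\to\D_s'(X;\EE)\). Continuity for the strong topologies holds because bounded sets of a DFS space are contained and bounded in some Banach step, hence are bounded in \(C_c^\infty\).
\end{enumerate}

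The main obstacle is injectivity in (c). It amounts to density of \(\G_c^s(X;\EE^*)\) in \(C_c^\infty(X;\EE^*)\). I would prove it as follows.
\begin{enumerate}[(1)]
\item Use \(s>1\) to construct a nonnegative Gevrey cutoff \(\rho\in\G_c^s(\R^d)\) with \(\int\rho=1\), as allowed by Remark~\ref{rem:s-equal-one}.
\item Note that convolution of a compactly supported smooth function with \(\rho_\varepsilon=\varepsilon^{-d}\rho(\cdot/\varepsilon)\) is Gevrey, since all derivatives fall on \(\rho_\varepsilon\). These convolutions converge in \(C_c^\infty\) with supports in a fixed compact set.
\item Globalize using a locally finite analytic atlas and a Gevrey partition of unity subordinate to it; such partitions exist because Gevrey cutoffs exist. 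Then pull back through the analytic charts and frames, which preserve \(\G^s\).
\end{enumerate}
The same density argument shows that \(L^1_{\mathrm{loc}}\)-sections, and in particular smooth ones, vanishing as ultradistributions vanish almost everywhere. This covers the injectivity of the composite maps as well.

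For restriction, the extension-by-zero map \(\G_c^s(\mathcal U;\EE^*|_{\mathcal U})\to\G_c^s(X;\EE^*)\) is well defined: a section with compact support in \(\mathcal U\) remains Gevrey on \(X\) after extension by zero. It is continuous on each fixed-support step, hence continuous on the inductive limit. I would define restriction as its transpose, and it is continuous for the strong topologies because transposes of continuous maps are strongly continuous. Finally, I would check that the restrictions form a sheaf, namely that a locally zero functional is zero. This again uses a Gevrey partition of unity, which reduces any test section to finitely many pieces supported in the given open sets. With the sheaf property in hand, the support is defined as the complement of the largest open set on which the restriction vanishes.
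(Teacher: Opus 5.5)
Your proposal is correct and follows the same route as the paper. The FS property comes by duality from the DFS property of \(\G_c^s(X;\EE^*)\), the last injection is the transpose of the dense inclusion \(\G_c^s(X;\EE^*)\hookrightarrow C_c^\infty(X;\EE^*)\) with density being where \(s>1\) enters, and restriction is the transpose of extension by zero; you also fill in what the paper leaves implicit, namely the mollifier and partition-of-unity proof of density and the localization (sheaf) property needed for supports to be well defined.
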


\begin{proof}
	Since \(\G_c^s(X;\EE^*)\) is a DFS space, its strong dual is an FS
	space. The first two maps in \eqref{eq:natural-inclusions} are the
	standard continuous embeddings. The last one is obtained by
	transposing the continuous dense inclusion
	\(\G_c^s(X;\EE^*) \hookrightarrow C_c^\infty(X;\EE^*),\) 
	which is dense because \(s>1\).
	
	If \(\mathcal U\subset X\) is open, extension by zero defines a
	continuous map
	\[
	\G_c^s(\mathcal U;\EE^*|_{\mathcal U})
	\longrightarrow
	\G_c^s(X;\EE^*).
	\]
	
	Its transpose is the restriction map on Gevrey ultradistributions.
\end{proof}

\begin{remark}\label{rem:density-bundle}
	The use of the fixed density \(\mu\) is only a notational
	convenience. An invariant definition is
	\[
	\D_s'(X;\EE)
	=
	\left(
	\G_c^s
	\bigl(X;\EE^*\otimes\mathcal D_X\bigr)
	\right)'_b.
	\]
	
	A positive real-analytic density identifies this space topologically
	with the realization in
	\eqref{eq:gevrey-ultradistributions}. Different choices of positive
	real-analytic densities give canonically isomorphic realizations,
	since the ratio of any two such densities is a positive
	real-analytic function.
	
	For the invariant construction of distributions with values in vector
	bundles, see \cite[Section~3.1]{GrosserEtAl2001}. The Gevrey
	formulation is obtained by replacing compactly supported smooth
	sections with compactly supported Gevrey sections.
\end{remark}

\subsection{Differential operators and transposition} \

Let
\[
P:C^\infty(X;\EE)\longrightarrow C^\infty(X;\FF)
\]
be a linear differential operator whose coefficients are of Gevrey order
\(s\) in analytic local coordinates and analytic local frames. Since
Gevrey classes are stable under differentiation and multiplication, \(P\)
induces continuous linear maps
\begin{equation}\label{eq:P-on-gevrey}
	P:\G^s(X;\EE)\longrightarrow\G^s(X;\FF),
	\qquad
	P:\G_c^s(X;\EE)\longrightarrow\G_c^s(X;\FF).
\end{equation}
The second map is well defined because differential operators do not
increase supports.

Let \(\mu\) be the positive real-analytic density fixed above. The formal
transpose of \(P\), with respect to \(\mu\) and the fiberwise duality
pairings, is the differential operator
\[
{}^tP:C^\infty(X;\FF^*)\longrightarrow C^\infty(X;\EE^*)
\]
characterized by
\begin{equation}\label{eq:formal-transpose}
	\int_X\langle Pu,\varphi\rangle\,\mu
	=
	\int_X\langle u,{}^tP\varphi\rangle\,\mu,
\end{equation}
for every \(u\in C^\infty(X;\EE)\) and
\(\varphi\in C_c^\infty(X;\FF^*)\).

The coefficients of \({}^tP\) are again of Gevrey order \(s\). Indeed, in
local coordinates they are obtained from the coefficients of \(P\) and
the density \(\mu\) by differentiation and multiplication. Consequently,
\({}^tP\) defines a continuous linear map
\begin{equation}\label{eq:transpose-on-test-sections}
	{}^tP:
	\G_c^s(X;\FF^*)
	\longrightarrow
	\G_c^s(X;\EE^*).
\end{equation}

The strong transpose of \eqref{eq:transpose-on-test-sections} defines a
continuous operator
\begin{equation}\label{eq:P-on-ultradistributions-map}
	P:\D_s'(X;\EE)\longrightarrow\D_s'(X;\FF)
\end{equation}
by
\begin{equation}\label{eq:P-on-ultradistributions}
	\langle Pu,\varphi\rangle
	:=
	\langle u,{}^tP\varphi\rangle,
	\qquad
	u\in\D_s'(X;\EE),\quad
	\varphi\in\G_c^s(X;\FF^*).
\end{equation}
This extension agrees with the usual action of \(P\) on smooth sections
and distributions.

We use the same notation for a differential operator and its action on
Gevrey sections and ultradistributions. The same constructions apply on
every open subset of \(X\) and commute with restrictions.

\section{The tube complex on \(M\times\T^m\)}
\label{sec:tube-complex}

Let \(M\) be a connected, oriented real-analytic manifold of dimension
\(n\), let \(\T^m:=\R^m/(2\pi\Z^m)\), and set
\[
X:=M\times\T^m.
\]

We denote by \(\pi_M:X\longrightarrow M\) the canonical projection.

For each \(q=0,\ldots,n\), consider the real-analytic complex vector
bundle
\[
\df^{0,q}X
:=
\pi_M^*\bigl(\df^q\C T^*M\bigr).
\]

Its smooth sections are precisely the complex-valued differential forms
on \(X\) involving only covectors in the \(M\)-directions. Thus, if
\(t=(t_1,\ldots,t_n)\) are local coordinates on \(M\), every
\(u\in C^\infty(X;\df^{0,q}X)\) can be written locally as
\[
u(t,x)
=
\sum_{|J|=q}u_J(t,x)\,\mathrm dt_J,
\qquad x\in\T^m.
\]

	We use the notation
\begin{equation}\label{eq:forms-as-sections}
	\df^{0,q}\F(X)
	:=
	\F(X;\df^{0,q}X),
	\qquad
	\df^{0,0}\F(X)=\F(X),
\end{equation}
where $\F\in \{\G^s,\G^s_c,\D_s',C^\infty,C_c^\infty\}$.

Let \(\omega_1,\ldots,\omega_m\in\df^1\G^s(M)\) 
be real-valued closed \(1\)-forms. We identify each \(\omega_k\) 
with its pullback to \(X\). For \(q=0,\ldots,n-1\), define
\begin{equation}\label{eq:tube-complex-operator}
	\L^q:
	\df^{0,q}C^\infty(X)
	\longrightarrow
	\df^{0,q+1}C^\infty(X)
\end{equation}
by
\[
\L^q u
=
\mathrm d_tu
+
\sum_{k=1}^m
\omega_k\wedge\partial_{x_k}u,
\]
where \(\mathrm d_t\) denotes the exterior derivative in the
\(M\)-variable and \(\partial_{x_k}\) acts coefficientwise.

Since the forms \(\omega_k\) are closed and the operators
\(\partial_{x_k}\) commute, a direct computation gives
\[
\L^{q+1}\L^q=0.
\]
Hence,
\[
0\longrightarrow C^\infty(X)
\xrightarrow{\L^0}
\df^{0,1}C^\infty(X)
\xrightarrow{\L^1}\cdots
\xrightarrow{\L^{n-1}}
\df^{0,n}C^\infty(X)
\longrightarrow0
\]
is a differential complex.

Since the coefficients of the operators \(\L^q\) are of Gevrey order
\(s\), the same operators define continuous maps
\[
\L^q:
\df^{0,q}\G^s(X)
\longrightarrow
\df^{0,q+1}\G^s(X)
\]
and
\[
\L^q:
\df^{0,q}\G_c^s(X)
\longrightarrow
\df^{0,q+1}\G_c^s(X).
\]

By transposition, they also act continuously on the corresponding spaces
of Gevrey ultradistributions.

Assume from now on that \(M\) is oriented, and let \(\mathrm dx\) denote
the normalized Haar density on \(\T^m\). For complementary degrees,
consider the bilinear pairing
\[
\langle u,v\rangle
:=
\int_{\T^m}
\left(
\int_M u\wedge v
\right)\mathrm dx,
\]
whenever at least one of the factors has compact support in the
\(M\)-variable. 

If
\[
u\in\df^{0,q}C^\infty(X),
\qquad
v\in\df^{0,n-q-1}C_c^\infty(X),
\]
then integration by parts on \(M\) and on \(\T^m\) gives
\begin{equation}\label{eq:integration-by-parts-Lq}
	\langle\L^qu,v\rangle
	=
	(-1)^{q+1}
	\langle u,\L^{n-q-1}v\rangle.
\end{equation}
Consequently,
\begin{equation}\label{eq:transpose-Lq}
	{}^t\L^q
	=
	(-1)^{q+1}\L^{n-q-1}.
\end{equation}
In particular,
\[
{}^t\L^0=-\L^{n-1},
\qquad
{}^t\L^{n-1}=(-1)^n\L^0.
\]
The signs will play no role in the solvability arguments below.

\begin{remark}\label{rem:nonorientable-base}
	On a non-orientable manifold, the preceding pairing can be formulated
	using forms with values in the orientation bundle, or equivalently
	suitable density-valued forms. The transpose then acts on the
	corresponding twisted complex. We assume that \(M\) is oriented in
	order to avoid this additional notation.
\end{remark}

\subsection{Solvability notions and an abstract criterion} \

Throughout this subsection, \(X\) is assumed to be non-compact. Let
\[
P:\D_s'(X;\EE)\longrightarrow\D_s'(X;\FF)
\]
be a continuous local operator. Here, locality means that \(P\) acts on
every open subset \(\mathcal U\subset X\) and commutes with restrictions:
whenever \(\mathcal V\subset\mathcal U\),
\[
P_{\mathcal V}\bigl(u|_{\mathcal V}\bigr)
=
(P_{\mathcal U}u)|_{\mathcal V}.
\]

We also assume that, on every open subset \(\mathcal U\subset X\), its
transpose defines a continuous map
\[
{}^tP:
\G_c^s(\mathcal U;\FF^*)
\longrightarrow
\G_c^s(\mathcal U;\EE^*).
\]

Every differential operator with coefficients of Gevrey order \(s\)
considered below satisfies these assumptions.

\begin{definition}\label{def:non-confinement}
	We say that \({}^tP\) has the \emph{property of non-confinement of
		Gevrey singularities} if
	\[
	u\in C_c^\infty(X;\FF^*),
	\qquad
	{}^tPu\in\G_c^s(X;\EE^*)
	\quad\Longrightarrow\quad
	u\in\G_c^s(X;\FF^*).
	\]
\end{definition}

\begin{definition}\label{def:P-convex-supports}
	We say that \(X\) is \emph{\(P\)-convex with respect to supports} if,
	for every compact set \(K\subset X\), there exists a compact set
	\(K'\subset X\) such that
	\[
	\varphi\in\G_c^s(X;\FF^*),
	\qquad
	\operatorname{supp}({}^tP\varphi)\subset K
	\quad\Longrightarrow\quad
	\operatorname{supp}\varphi\subset K'.
	\]
\end{definition}

\begin{definition}\label{def:solvability-notions}
	We say that \(P\) is:
	\begin{enumerate}[(i)]
		\item
		\emph{semiglobally solvable in Gevrey ultradistributions} if,
		for every \(f\in\D_s'(X;\FF)\) and every relatively compact open
		set \(\mathcal W\Subset X\), there exists
		\(u\in\D_s'(\mathcal W;\EE)\) such that
		\[
		Pu=f|_{\mathcal W};
		\]
		
		\item
		\emph{globally solvable in Gevrey ultradistributions} if, for
		every \(f\in\D_s'(X;\FF)\), there exists
		\(u\in\D_s'(X;\EE)\) such that
		\[
		Pu=f.
		\]
		Equivalently, \(P\) is globally solvable if it is surjective.
	\end{enumerate}
\end{definition}

Clearly, global solvability implies semiglobal solvability. The following
criterion reduces global solvability to injectivity, regularity, and support
control for the transpose.

\begin{proposition}[A criterion for global solvability]
	\label{prop:abstract-solvability}
	Assume that
	\begin{enumerate}[(i)]
		\item
		\(	{}^tP: \G_c^s(X;\FF^*)  \longrightarrow \G_c^s(X;\EE^*)\) is injective;
		
		\item
		\({}^tP\) has the property of non-confinement of Gevrey singularities;
		
		\item
		\(X\) is \(P\)-convex with respect to supports. 
	\end{enumerate}
	Then \(P\) is globally solvable in Gevrey ultradistributions.
\end{proposition}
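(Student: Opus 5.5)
The plan is the classical duality route: semiglobal solvability first, then an exhaustion argument. Throughout, write \(E:=\G_c^s(X;\FF^*)\), \(F:=\G_c^s(X;\EE^*)\), and for a compact \(K\subset X\) let \(E_K:=\G_c^s(K;\FF^*)\), \(F_K:=\G_c^s(K;\EE^*)\).

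\textbf{Step 1 (semiglobal solvability).} Fix \(\mathcal W\Subset X\) and let \(f\in\D_s'(X;\FF)\). Solving \(Pu=f|_{\mathcal W}\) amounts to extending the functional \({}^tP\varphi\mapsto\langle f,\varphi\rangle\), \(\varphi\in\G_c^s(\mathcal W;\FF^*)\), continuously to \(\G_c^s(\mathcal W;\EE^*)\). By injectivity (i), this functional is well defined on the range of \({}^tP\). It suffices to show that, for \(K=\overline{\mathcal W}\), the map \({}^tP:E_K\to F_K\) is a topological isomorphism onto its image, which must be closed. Then Hahn--Banach in the DFS space \(F_K\) extends the functional, and the extension is transported back to \(\mathcal W\).

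To get the isomorphism, I would show that \(E_K\to\{\psi\in F_K:\psi\in{}^tP(E)\}\) has continuous inverse. The key mechanism is non-confinement (ii), combined with a closed-graph / Baire argument in the style of Hörmander's classical semiglobal solvability proof. Take a sequence with \({}^tP\varphi_\nu\) bounded in some Banach step \(\G_c^{s,h}(K;\EE^*)\). By injectivity of \({}^tP\) on \(C_c^\infty\) (a consequence of injectivity on Gevrey test functions together with (ii)), combined with ellipticity-free compactness, one obtains bounds of the \(\varphi_\nu\) in \(C^\infty_c(K)\). Non-confinement then upgrades these to Gevrey bounds via a closed-graph theorem applied to the space \(\{u\in C_c^\infty(K):{}^tPu\in\G^{s,h}\}\), which is a Fréchet space contained in \(\bigcup_{h'}\G_c^{s,h'}\). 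Grothendieck's factorization theorem then puts it in a single step \(h'\), with a continuous inclusion. The closed-range statement of Proposition~\ref{prop:topology-test-sections}(iv) ensures that the local spaces sit well inside the global ones. I would follow \cite{Araujo2017} here.

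\textbf{Step 2 (global solvability).} Take a compact exhaustion \(K_\ell\). Convexity (iii) gives compacts \(K_\ell'\) such that \(\operatorname{supp}{}^tP\varphi\subset K_\ell\) implies \(\operatorname{supp}\varphi\subset K_\ell'\). Hence \({}^tP:E\to F\) has range \(R\) with \(R\cap F_{K_\ell}\subset{}^tP(E_{K_\ell'})\). Applying Step 1 on a neighborhood of \(K_\ell'\) shows that \(({}^tP)^{-1}\) is continuous on each \(R\cap F_{K_\ell}\). Because \(F\) is a DFS (regular) inductive limit, this implies that \(R\) is closed and that \(({}^tP)^{-1}:R\to E\) is continuous. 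Therefore \(\varphi\mapsto\langle f,({}^tP)^{-1}\cdot\rangle\) is continuous on \(R\), and Hahn--Banach extends it to \(u\in F'=\D_s'(X;\EE)\) with \(Pu=f\).

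\textbf{Main obstacle.} The hardest part is Step 1, namely the a priori Gevrey estimate that makes \({}^tP\) a topological isomorphism onto a closed range. This requires the Grothendieck factorization and closed-graph machinery in DFS and Fréchet spaces, plus care with the limit structure in Step 2. I would rely on the corresponding lemmas of \cite{Araujo2017} rather than reprove them.
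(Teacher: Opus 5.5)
Your route is the paper's. Its proof consists of \cite[Theorem~4.16]{Araujo2017} (injectivity plus non-confinement give semiglobal solvability) followed by \cite[Theorem~4.20]{Araujo2017} (semiglobal solvability plus $P$-convexity for supports give global solvability). Your Step~2 is a sound self-contained version of the second citation: in the DFS space $\G_c^s(X;\EE^*)$, a subspace whose traces on the steps are closed is itself closed, and then Hahn--Banach applies. Note, however, that Step~2 uses the stronger output of Step~1: for each compact $K$, ${}^tP$ is a topological isomorphism of $\G_c^s(K;\FF^*)$ onto a closed subspace of $\G_c^s(K;\EE^*)$. So if you outsource Step~1 to \cite{Araujo2017}, you must quote that estimate, not only the semiglobal conclusion.

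The real defect is the order of the estimates in your sketch of Step~1. You propose to get $C^\infty$ bounds on $\varphi_\nu$ from injectivity plus ``ellipticity-free compactness'', and only afterwards invoke (ii). But knowing only that ${}^tP\varphi_\nu$ is bounded in $\G_c^{s,h}(K;\EE^*)$, you have no bound on $\varphi_\nu$ in any space compactly embedded in $C^k(K)$. There is nothing from which to extract a convergent subsequence, and injectivity alone gives no quantitative control, so this step fails as written. The compactness argument must come after the factorization:
\begin{enumerate}[(a)]
\item The space $V_h:=\{u\in C_c^\infty(K;\FF^*):{}^tPu\in\G_c^{s,h}(K;\EE^*)\}$, with the graph topology, is Fr\'echet. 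Hypothesis (ii) gives $V_h\subset\bigcup_{h'}\G_c^{s,h'}(K;\FF^*)$.
\item Grothendieck's factorization then yields $h'$, $k$, $C$ with $\|u\|_{h'}\le C\bigl(p_k(u)+\|{}^tPu\|_h\bigr)$ for $u\in V_h$. Here $\|\cdot\|_h$ is the norm of the step $\G_c^{s,h}(K;\cdot)$ and $p_k$ is a $C^k$-seminorm on $K$.
\item Now argue by contradiction. Suppose $p_k(u_\nu)=1$ and $\|{}^tPu_\nu\|_h\to0$. The $u_\nu$ are bounded in $\G_c^{s,h'}(K;\FF^*)$, so by Proposition~\ref{prop:topology-test-sections}(i) a subsequence converges in $\G_c^{s,h''}(K;\FF^*)$, $h''>h'$. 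Its limit $u\in\G_c^s(K;\FF^*)$ satisfies $p_k(u)=1$ and ${}^tPu=0$, contradicting (i).
\end{enumerate}
This removes $p_k$ and gives $\|u\|_{h'}\le C'\|{}^tPu\|_h$. With the same DFS step argument as in your Step~2, this yields the topological isomorphism onto a closed range that Step~1 claims. With this reordering, the proposal goes through.
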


\begin{proof}
	By \cite[Theorem~4.16]{Araujo2017}, assumptions \emph{(i)} and
	\emph{(ii)} imply that \(P\) is semiglobally solvable in Gevrey
	ultradistributions. Assumption \emph{(iii)} then allows us to apply
	\cite[Theorem~4.20]{Araujo2017}, which promotes semiglobal solvability
	to global solvability.
\end{proof}

\section{Global solvability of the tube complex}
\label{sec:global-solvability}

We retain the notation and assumptions of
Section~\ref{sec:tube-complex} and assume from now on that \(M\) is
non-compact. Thus,
\[
0\longrightarrow \G^s(X)
\xrightarrow{\L^0}
\df^{0,1}\G^s(X)
\xrightarrow{\L^1}\cdots
\xrightarrow{\L^{n-1}}
\df^{0,n}\G^s(X)
\longrightarrow 0
\]
is the Gevrey complex associated with
\(\boldsymbol{\omega}=(\omega_1,\ldots,\omega_m)\).

The fiber-translation and transport arguments used below may be viewed as
a physical-space reformulation of the partial Fourier series approach to
global solvability and global hypoellipticity for tube structures; see
\cite{BCM1993,BCP1996,BP1999_jmaa,ADL2023gh,ADL2023gs,
	HZ2017,HZ2019,CKMT}.

\subsection{Fiber translations and local normal forms} \

The local normal form of the complex is obtained by translating the torus
fibers. Let \(\mathcal U\subset M\) be open and let
\[
\boldsymbol{\phi}
=
(\phi_1,\ldots,\phi_m):
\mathcal U\longrightarrow\R^m
\]
be a Gevrey map of order \(s\). Define
\[
\tau_{\boldsymbol{\phi}}:
\mathcal U\times\T^m
\longrightarrow
\mathcal U\times\T^m,
\qquad
\tau_{\boldsymbol{\phi}}(t,x)
=
(t,x+\boldsymbol{\phi}(t)),
\]
where the second component is understood modulo \(2\pi\Z^m\).

For \(0\leq q\leq n\), define
\[
\mathcal T_{\boldsymbol{\phi}}^q:
\df^{0,q}C^\infty(\mathcal U\times\T^m)
\longrightarrow
\df^{0,q}C^\infty(\mathcal U\times\T^m)
\]
by
\[
\mathcal T_{\boldsymbol{\phi}}^q
\left(
\sum_{|I|=q}u_I(t,x)\,\mathrm dt_I
\right)
=
\sum_{|I|=q}
u_I\bigl(t,x+\boldsymbol{\phi}(t)\bigr)\,\mathrm dt_I.
\]

Since \(\tau_{\boldsymbol{\phi}}\) is a Gevrey diffeomorphism with inverse
\(\tau_{-\boldsymbol{\phi}}\), the operator
\(\mathcal T_{\boldsymbol{\phi}}^q\) is a topological automorphism of
\(\df^{0,q}\G^s(\mathcal U\times\T^m)\) and of the corresponding
compactly supported space. By transposition, it also induces a
topological automorphism of
\(\df^{0,q}\D_s'(\mathcal U\times\T^m)\).

\begin{proposition}[Covariance under fiber translations]
	\label{prop:gauge-covariance}
	Let \(\boldsymbol{\eta} = (\eta_1,\ldots,\eta_m)\) 
	be a family of Gevrey \(1\)-forms of order \(s\) on \(\mathcal U\), and
	write
	\[
	\boldsymbol{\eta}+\mathrm d\boldsymbol{\phi}
	:=
	(\eta_1+\mathrm d\phi_1,\ldots,
	\eta_m+\mathrm d\phi_m).
	\]
	Then, for \(q=0,\ldots,n-1\),
	\begin{equation}\label{eq:gauge-covariance}
		\L_{\boldsymbol{\eta}}^q
		\mathcal T_{\boldsymbol{\phi}}^q
		=
		\mathcal T_{\boldsymbol{\phi}}^{q+1}
		\L_{\boldsymbol{\eta}+\mathrm d\boldsymbol{\phi}}^q.
	\end{equation}
	
	In particular, if
	\(\omega_k|_{\mathcal U}=\mathrm d\phi_k\), \(k=1,\dots,m\), then
	\begin{equation}\label{eq:local-normal-form}
		\L_{\boldsymbol{\omega}}^q
		=
		(\mathcal T_{\boldsymbol{\phi}}^{q+1})^{-1}
		\mathrm d_t
		\mathcal T_{\boldsymbol{\phi}}^q
		\qquad
		\text{on }\mathcal U\times\T^m.
	\end{equation}
\end{proposition}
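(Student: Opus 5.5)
We verify \eqref{eq:gauge-covariance} by a direct chain-rule computation on smooth sections, working in local coordinates \(t=(t_1,\ldots,t_n)\) on \(\mathcal U\). Both sides are first-order differential operators with Gevrey coefficients, and \(\mathcal T_{\boldsymbol{\phi}}^q\) acts coefficientwise on the \(\mathrm dt_I\). It therefore suffices to check the identity on a single term \(u=v(t,x)\,\mathrm dt_I\) with \(|I|=q\) and \(v\in C^\infty(\mathcal U\times\T^m)\). Once it holds on \(C^\infty\), it extends to \(\G^s\), to \(\G_c^s\) and, by transposition, to \(\D_s'\). The reason is that every operator involved is continuous on these spaces and \(\G_c^s\) is dense in \(C_c^\infty\). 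Alternatively, one can transpose the identity itself: the transposes of \(\mathcal T_{\boldsymbol{\phi}}\) are \(\mathcal T_{-\boldsymbol{\phi}}\) up to the pairing, and one then uses \eqref{eq:transpose-Lq}.

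Set \(w(t,x):=v(t,x+\boldsymbol{\phi}(t))\). The chain rule gives
\[
\partial_{t_j}w(t,x)=(\partial_{t_j}v)(t,x+\boldsymbol{\phi}(t))+\sum_{k=1}^m\partial_{t_j}\phi_k(t)\,(\partial_{x_k}v)(t,x+\boldsymbol{\phi}(t)),
\]
and also \(\partial_{x_k}w=(\partial_{x_k}v)(t,x+\boldsymbol{\phi}(t))\). Hence
\[
\mathrm d_t(w\,\mathrm dt_I)=\mathcal T^{q+1}_{\boldsymbol{\phi}}\bigl(\mathrm d_t(v\,\mathrm dt_I)\bigr)+\sum_k\mathrm d\phi_k\wedge\mathcal T^{q+1}_{\boldsymbol{\phi}}\bigl(\partial_{x_k}v\,\mathrm dt_I\bigr).
\]
Because \(\mathrm d\phi_k\) and \(\eta_k\) do not depend on \(x\), wedging with them commutes with \(\mathcal T_{\boldsymbol{\phi}}\). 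For the same reason,
\[
\sum_k\eta_k\wedge\partial_{x_k}\mathcal T^q_{\boldsymbol{\phi}}u=\mathcal T^{q+1}_{\boldsymbol{\phi}}\Bigl(\sum_k\eta_k\wedge\partial_{x_k}u\Bigr).
\]
Adding the two displays gives \(\L^q_{\boldsymbol{\eta}}\mathcal T^q_{\boldsymbol{\phi}}u=\mathcal T^{q+1}_{\boldsymbol{\phi}}\L^q_{\boldsymbol{\eta}+\mathrm d\boldsymbol{\phi}}u\). Periodicity poses no issue: \(\boldsymbol{\phi}\) is real-valued and the translation is taken modulo \(2\pi\Z^m\), so \(w\) is well defined on \(\mathcal U\times\T^m\).

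For \eqref{eq:local-normal-form}, apply \eqref{eq:gauge-covariance} with \(\boldsymbol{\eta}=0\) and \(\mathrm d\boldsymbol{\phi}=\boldsymbol{\omega}|_{\mathcal U}\). Then \(\L^q_{0}=\mathrm d_t\), so \(\mathrm d_t\mathcal T^q_{\boldsymbol{\phi}}=\mathcal T^{q+1}_{\boldsymbol{\phi}}\L^q_{\boldsymbol{\omega}}\). Since \(\mathcal T^{q+1}_{\boldsymbol{\phi}}\) is invertible with inverse \(\mathcal T^{q+1}_{-\boldsymbol{\phi}}\), composing on the left by the inverse gives the claim.

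The main obstacle is not the computation but the extension to ultradistributions. There, \(\mathcal T_{\boldsymbol{\phi}}\) is defined by transposition, so one must check that this transposed action agrees with the composition formula. The cleanest route is the density argument. Since \(s>1\), \(C^\infty\) (indeed \(\G^s\)) is sequentially dense in \(\D_s'\) on \(\mathcal U\times\T^m\), for instance via Gevrey cutoffs and convolution. Both sides of \eqref{eq:gauge-covariance} are continuous on \(\D_s'\), so identity on a dense subspace yields identity everywhere.
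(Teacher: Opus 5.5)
Your computation is essentially the paper's proof. It uses the same chain rule for \(\mathrm d_t\bigl(u_I(t,x+\boldsymbol{\phi}(t))\bigr)\), the same commutation of \(\partial_{x_k}\) and of wedging with the \(x\)-independent forms \(\eta_k\) past \(\mathcal T_{\boldsymbol{\phi}}\), and the same specialization \(\boldsymbol{\eta}=0\) with \((\mathcal T^{q+1}_{\boldsymbol{\phi}})^{-1}=\mathcal T^{q+1}_{-\boldsymbol{\phi}}\). The paper verifies the identity only on smooth forms, which is where the operators are defined and where the identity is used in Proposition~\ref{prop:gevrey-regularity-propagation}, so your extension to \(\D_s'\) is an optional addendum rather than the main obstacle; also, \(\partial_{x_k}v\,\mathrm dt_I\) is a \(q\)-form, so you should write \(\mathcal T^{q}_{\boldsymbol{\phi}}\) there, a harmless slip.
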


\begin{proof}
	Let 
	\[
	u
	=
	\sum_{|I|=q}u_I(t,x)\,\mathrm dt_I
	\in
	\df^{0,q}C^\infty(\mathcal U\times\T^m).
	\]
	By the chain rule,
	\[
	\mathrm d_t
	\bigl(
	u_I(t,x+\boldsymbol{\phi}(t))
	\bigr)
	=
	(\mathrm d_tu_I)
	(t,x+\boldsymbol{\phi}(t))
	+
	\sum_{k=1}^m
	(\partial_{x_k}u_I)
	(t,x+\boldsymbol{\phi}(t))
	\,\mathrm d\phi_k(t).
	\]

	Summing over \(I\), we obtain
	\[
	\mathrm d_t
	\bigl(
	\mathcal T_{\boldsymbol{\phi}}^q u
	\bigr)
	=
	\mathcal T_{\boldsymbol{\phi}}^{q+1}
	\left(
	\mathrm d_tu
	+
	\sum_{k=1}^m
	\mathrm d\phi_k\wedge
	\partial_{x_k}u
	\right).
	\]
	
	Moreover,
	\[
	\partial_{x_k} \bigl( \mathcal T_{\boldsymbol{\phi}}^q u \bigr)
	= \mathcal T_{\boldsymbol{\phi}}^q \bigl(\partial_{x_k}u \bigr).
	\]
	
	Since \(\eta_k\) depends only on the base variable,
	\[
	\eta_k\wedge
	\partial_{x_k}
	\bigl(
	\mathcal T_{\boldsymbol{\phi}}^q u
	\bigr)
	=
	\mathcal T_{\boldsymbol{\phi}}^{q+1}
	\bigl(
	\eta_k\wedge\partial_{x_k}u
	\bigr).
	\]
	Combining these identities gives
	\[
	\L_{\boldsymbol{\eta}}^q
	\mathcal T_{\boldsymbol{\phi}}^q u
	=
	\mathcal T_{\boldsymbol{\phi}}^{q+1}
	\left[
	\mathrm d_tu
	+
	\sum_{k=1}^m
	(\eta_k+\mathrm d\phi_k)
	\wedge\partial_{x_k}u
	\right],
	\]
	which proves \eqref{eq:gauge-covariance}.
	
	If
	\(\boldsymbol{\omega}=\mathrm d\boldsymbol{\phi}\) on
	\(\mathcal U\), taking \(\boldsymbol{\eta}=0\) gives
	\[
	\mathrm d_t
	\mathcal T_{\boldsymbol{\phi}}^q
	=
	\mathcal T_{\boldsymbol{\phi}}^{q+1}
	\L_{\boldsymbol{\omega}}^q.
	\]
	Since
	\((\mathcal T_{\boldsymbol{\phi}}^{q+1})^{-1}
	= \mathcal T_{-\boldsymbol{\phi}}^{q+1}\), 
	the local normal form follows.
\end{proof}

A global version of the previous result can be found in \cite[Theorem 7.1]{AFJR2026_arXiv}.

The following elementary lemma complements the local normal form
\eqref{eq:local-normal-form}; see also
\cite[proof of Corollary~3.4]{ADL2023gs}.

\begin{lemma}[Gevrey primitives with a parameter]
	\label{lemma:gevrey-primitive}
	Let \(Q=I_1\times\cdots\times I_n\subset\R^n\) 
	be an open rectangle, and let \(t^0\in Q\). Suppose that
	\[
	v\in C^\infty(Q\times\T^m),
	\qquad
	\mathrm d_tv
	=
	\sum_{j=1}^n g_j(t,x)\,\mathrm dt_j
	\in\df^{0,1}\G^s(Q\times\T^m),
	\]
	and
	\[
	v(t^0,\cdot)\in\G^s(\T^m).
	\]
	Then
	\[
	v\in\G^s(Q\times\T^m).
	\]
\end{lemma}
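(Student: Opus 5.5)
The idea is to write \(v\) as its value on the initial fiber plus an explicit primitive of \(\mathrm d_tv\) along a coordinate path. Since \(Q\) is a rectangle, for each \(t\in Q\) the staircase path from \(t^0\) to \(t\) lies in \(Q\). This path moves successively in \(t_1,\dots,t_n\): first from \((t^0_1,\dots,t^0_n)\) to \((t_1,t^0_2,\dots,t^0_n)\), and so on up to \((t_1,\dots,t_n)\). The fundamental theorem of calculus, applied to the smooth function \(v\), then gives
\[
v(t,x)=v(t^0,x)+\sum_{j=1}^n\int_{t_j^0}^{t_j}
g_j\bigl(t_1,\dots,t_{j-1},\sigma,t^0_{j+1},\dots,t^0_n,x\bigr)\,\mathrm d\sigma .
\]
It then suffices to show that each term on the right lies in \(\G^s(Q\times\T^m)\).

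The first term, \((t,x)\mapsto v(t^0,x)\), is Gevrey on \(Q\times\T^m\). Its \(t\)-derivatives vanish, and its \(x\)-derivatives are controlled by the hypothesis \(v(t^0,\cdot)\in\G^s(\T^m)\).

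For the \(j\)-th integral term \(w_j\), I would estimate \(\partial_t^\alpha\partial_x^\beta w_j\) on a compact set \(K\times\T^m\). I would take \(K\subset Q\) to be a closed rectangle containing \(t^0\), which is allowed, since such rectangles are cofinal among compact subsets of \(Q\). All points on the staircase paths from \(t^0\) to points of \(K\) then lie in \(K\).

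\begin{enumerate}[(a)]
\item If \(\alpha_k\ne0\) for some \(k>j\), the derivative is zero, because \(w_j\) does not depend on \(t_{j+1},\dots,t_n\).
\item If \(\alpha_j\ge1\), one derivative in \(t_j\) removes the integral. What remains is \(\partial^{\alpha-e_j}_{t}\partial_x^\beta g_j\), evaluated at the point \((t_1,\dots,t_j,t_{j+1}^0,\dots,t_n^0,x)\in K\times\T^m\). This is bounded by \(C h^{|\alpha|+|\beta|-1}((\alpha-e_j+\beta)!)^{s}\le C' h^{|\alpha|+|\beta|}((\alpha+\beta)!)^s\).
\item If \(\alpha_j=0\), differentiation passes under the integral. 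This gives a bound of \(\operatorname{diam}(K)\) times the Gevrey bound for \(g_j\) on \(K\times\T^m\).
\end{enumerate}

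In every case we obtain \(\|w_j\|_{K\times\T^m,h}<\infty\) for some \(h\), using only that \(g_j\in\G^s(Q\times\T^m)\) (compactness of \(\T^m\) lets us work uniformly in \(x\)). Summing the terms gives \(v\in\G^s(Q\times\T^m)\).

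The only mildly delicate point is the restriction map on the initial fiber: we need \(g_j\) restricted to a lower-dimensional affine slice, together with its integral, to retain Gevrey bounds uniformly. This is immediate here, because the restriction does not increase sup norms of derivatives and the integral only contributes a bounded length factor. So I expect no real obstacle beyond this bookkeeping.
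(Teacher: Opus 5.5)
Your proposal is correct and follows the paper's proof essentially verbatim: you use the same staircase-path representation of \(v\), pass derivatives under the integral or let the fundamental theorem of calculus absorb a \(t_j\)-derivative at the upper endpoint, and use uniform Gevrey bounds for the \(g_j\) on a compact rectangle containing all integration segments. Your case split (a)--(c) and your restriction to closed rectangles containing \(t^0\) only make explicit the bookkeeping the paper leaves implicit; note that your \((\alpha+\beta)!\) should be read as \(\alpha!\,\beta!\).
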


\begin{proof}
	For \(t\in Q\), integrate \(\mathrm d_tv\) along the coordinate polygonal
	path joining \(t^0\) to \(t\). This gives
	\[
	v(t,x)
	=
	v(t^0,x)
	+
	\sum_{j=1}^n
	\int_{t_j^0}^{t_j}
	g_j(t_1,\ldots,t_{j-1},r,t_{j+1}^0,\ldots,t_n^0,x)
	\,\mathrm dr.
	\]
	
	On every compact rectangle \(Q'\Subset Q\), the integration segments are
	contained in a fixed compact subset of \(Q\). The Gevrey estimates for the
	functions \(g_j\) are therefore uniform on these segments. Differentiating
	the preceding identity under the integral sign, and using the fundamental
	theorem of calculus when derivatives fall on the upper endpoint, yields
	constants \(C,h>0\) such that
	\[
	\sup_{(t,x)\in Q'\times\T^m}
	\left|
	\partial_t^\alpha\partial_x^\beta v(t,x)
	\right|
	\leq
	Ch^{|\alpha|+|\beta|}
	\alpha!^s\beta!^s.
	\]
	Since \(Q'\Subset Q\) is arbitrary, \(v\in\G^s(Q\times\T^m)\).
\end{proof}

The propagation arguments below are based on a transport identity associated
with \(\L^0\). Let \(\gamma:[a,b]\to M\) be a piecewise \(C^1\) path and set
\[
A_\gamma(r)
:=
\left(
\int_a^r
\omega_1|_{\gamma(\rho)}\bigl(\dot\gamma(\rho)\bigr)
\,\mathrm d\rho,
\dots,
\int_a^r
\omega_m|_{\gamma(\rho)}\bigl(\dot\gamma(\rho)\bigr)
\,\mathrm d\rho
\right)
\in\R^m.
\]
For each \(x\in\T^m\), the curve
\[
r\longmapsto
\bigl(\gamma(r),x+A_\gamma(r)\bigr)
\]
is the horizontal lift of \(\gamma\) through \((\gamma(a),x)\) for the
flat connection determined by the forms
\(\mathrm dx_k-\omega_k\). Integrating the derivative of a function along
this lifted path gives the following formula. For general background on
involutive and tube structures, see \cite{BCH_book}.

\begin{proposition}[Transport formula]
	\label{prop:transport-formula}
	Let \(u\in C^1(M\times\T^m)\) and set
	\[
	f:=\L^0u\in\df^{0,1}C^0(M\times\T^m).
	\]
	Then, for every piecewise \(C^1\) path
	\(\gamma:[a,b]\to M\) and every \(x\in\T^m\),
	\begin{equation}\label{eq:transport-formula}
		u\bigl(\gamma(b),x+A_\gamma(b)\bigr)
		-
		u\bigl(\gamma(a),x\bigr)
		=
		\int_a^b
		f_{\left(\gamma(r),\,x+A_\gamma(r)\right)}
		\bigl(\dot\gamma(r)\bigr)\,\mathrm dr.
	\end{equation}
	Here and below, additions in the torus variable are understood modulo
	\(2\pi\Z^m\).
	
	In particular, if \(\L^0u=0\) on  \(\gamma([a,b])\times\T^m\), then
	\begin{equation}\label{eq:homogeneous-transport}
		u\bigl(\gamma(b),x\bigr)
		=
		u\bigl(\gamma(a),x-A_\gamma(b)\bigr).
	\end{equation}
\end{proposition}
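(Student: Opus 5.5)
The plan is to reduce \eqref{eq:transport-formula} to the fundamental theorem of calculus applied to the scalar function
\[
F(r):=u\bigl(\gamma(r),x+A_\gamma(r)\bigr),\qquad r\in[a,b].
\]
First treat a single \(C^1\) piece of \(\gamma\); then handle the piecewise case by subdividing \([a,b]\) at the break points and summing. This works because \(A_\gamma\) is additive under concatenation: on a subinterval \([r_{i},r_{i+1}]\), the lifted curve starting at \(x+A_\gamma(r_i)\) is exactly the restriction of the global lift. The increments of \(F\) over the subintervals then telescope to \(F(b)-F(a)\), and the integrals add. Note that \(F(a)=u(\gamma(a),x)\), since \(A_\gamma(a)=0\). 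Also, \(F\) is continuous on \([a,b]\) and \(C^1\) on each piece because \(u\in C^1\).

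On a \(C^1\) piece, compute \(F'(r)\) by the chain rule. Since \(\dot A_{\gamma,k}(r)=\omega_k|_{\gamma(r)}(\dot\gamma(r))\), we get
\[
F'(r)=(\mathrm d_tu)_{(\gamma(r),x+A_\gamma(r))}\bigl(\dot\gamma(r)\bigr)
+\sum_{k=1}^m(\partial_{x_k}u)\bigl(\gamma(r),x+A_\gamma(r)\bigr)\,\omega_k|_{\gamma(r)}\bigl(\dot\gamma(r)\bigr).
\]
Because each \(\partial_{x_k}u\) is a scalar, \(\omega_k\wedge\partial_{x_k}u\) evaluated on \(\dot\gamma\) equals \(\partial_{x_k}u\cdot\omega_k(\dot\gamma)\). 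So the right-hand side is precisely \(f_{(\gamma(r),x+A_\gamma(r))}(\dot\gamma(r))\) with \(f=\L^0u\). This is the same computation as in the proof of Proposition~\ref{prop:gauge-covariance}, now carried out along a curve. Integrating \(F'\) over \([a,b]\) gives \eqref{eq:transport-formula}. The integrand is piecewise continuous, since \(f\) is continuous and \(\dot\gamma\) is piecewise continuous, so the integral makes sense.

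For \eqref{eq:homogeneous-transport}, note that if \(\L^0u=0\) on \(\gamma([a,b])\times\T^m\), the integrand vanishes identically, because the lifted curve stays in \(\gamma([a,b])\times\T^m\). Hence \(u(\gamma(b),y+A_\gamma(b))=u(\gamma(a),y)\) for all \(y\in\T^m\). Substituting \(y=x-A_\gamma(b)\), a bijection of \(\T^m\), gives the stated identity.

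No step is a real obstacle. The only points needing care are the bookkeeping at the break points of a piecewise \(C^1\) path (the telescoping argument) and the identification of \(\omega_k\wedge\partial_{x_k}u\) applied to a tangent vector in degree \(q=0\).
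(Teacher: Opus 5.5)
Your proof is correct and follows the paper's argument essentially verbatim: the paper also differentiates \(w_x(r)=u\bigl(\gamma(r),x+A_\gamma(r)\bigr)\) by the chain rule to identify the derivative with \(f_{(\gamma(r),\,x+A_\gamma(r))}(\dot\gamma(r))\), integrates on each \(C^1\) subinterval and sums, and uses \(A_\gamma(a)=0\). The homogeneous case is likewise obtained by replacing \(x\) with \(x-A_\gamma(b)\).
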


\begin{proof}
	Fix \(x\in\T^m\) and define
	\[
	w_x(r)
	:=
	u\bigl(\gamma(r),x+A_\gamma(r)\bigr).
	\]
	On every subinterval on which \(\gamma\) is of class \(C^1\),
	\[
	\frac{\mathrm d}{\mathrm dr}A_\gamma(r)
	=
	\bigl(
	\omega_1|_{\gamma(r)}(\dot\gamma(r)),
	\ldots,
	\omega_m|_{\gamma(r)}(\dot\gamma(r))
	\bigr).
	\]

	Hence, by the chain rule,
	\begin{align*}
		\frac{\mathrm d}{\mathrm dr}w_x(r)
		={}&
		\bigl(\mathrm d_tu\bigr)_{
			\left(\gamma(r),\,x+A_\gamma(r)\right)}
		\bigl(\dot\gamma(r)\bigr)
		\\
		&+
		\sum_{k=1}^m
		\omega_k|_{\gamma(r)}\bigl(\dot\gamma(r)\bigr)
		\bigl(\partial_{x_k}u\bigr)
		\bigl(\gamma(r),x+A_\gamma(r)\bigr)
		\\
		={}&
		f_{\left(\gamma(r),\,x+A_\gamma(r)\right)}
		\bigl(\dot\gamma(r)\bigr).
	\end{align*}

	Integrating on each \(C^1\) subinterval of \([a,b]\) and summing gives
	\[
	w_x(b)-w_x(a)
	=
	\int_a^b
	f_{\left(\gamma(r),\,x+A_\gamma(r)\right)}
	\bigl(\dot\gamma(r)\bigr)\,\mathrm dr.
	\]
	
	Since \(A_\gamma(a)=0\), this proves
	\eqref{eq:transport-formula}.
	
	If \(\L^0u=0\) on
	\(\gamma([a,b])\times\T^m\), the integral vanishes, and therefore
	\[
	u\bigl(\gamma(b),x+A_\gamma(b)\bigr)
	=
	u\bigl(\gamma(a),x\bigr).
	\]
	Replacing \(x\) by \(x-A_\gamma(b)\) yields
	\eqref{eq:homogeneous-transport}.
\end{proof}

\subsection{Compactly supported solutions and non-confinement of
	Gevrey singularities} \

The transport formula first yields injectivity on compactly supported
functions. This property is independent of any arithmetic condition on
\(\boldsymbol{\omega}\).

\begin{proposition}\label{prop:compact-support-injectivity}
	The map
	\[
	\L^0: C_c^\infty(X) \longrightarrow \df^{0,1}C_c^\infty(X)
	\]
	is injective. Consequently,
	\[
	\L^0: \G_c^s(X) \longrightarrow \df^{0,1}\G_c^s(X)
	\]
	is injective.
\end{proposition}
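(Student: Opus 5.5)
The plan is to use the homogeneous transport identity \eqref{eq:homogeneous-transport} from Proposition~\ref{prop:transport-formula}, together with connectedness and non-compactness of \(M\). Let \(u\in C_c^\infty(X)\) with \(\L^0u=0\) on all of \(X\).

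\emph{Step 1: a vanishing fiber.} Since \(\operatorname{supp}u\) is compact, its projection \(K:=\pi_M(\operatorname{supp}u)\) is compact in \(M\). Because \(M\) is non-compact, \(K\neq M\), so we can pick \(t_0\in M\setminus K\). Then \(u(t_0,x)=0\) for every \(x\in\T^m\).

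\emph{Step 2: transport to an arbitrary point.} Fix \(t\in M\). Since \(M\) is a connected manifold, it is path connected by piecewise \(C^1\) paths. Choose such a path \(\gamma:[a,b]\to M\) with \(\gamma(a)=t_0\) and \(\gamma(b)=t\). The hypothesis \(\L^0u=0\) holds everywhere, in particular on \(\gamma([a,b])\times\T^m\). Formula \eqref{eq:homogeneous-transport} then gives
\[
u(t,x)=u\bigl(t_0,x-A_\gamma(b)\bigr)=0
\]
for every \(x\in\T^m\). Since \(t\) was arbitrary, \(u\equiv0\), which proves injectivity on \(C_c^\infty(X)\).

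\emph{Step 3: the Gevrey statement.} We have \(\G_c^s(X)\subset C_c^\infty(X)\), and on this subspace \(\L^0\) acts by the same differential operator into \(\df^{0,1}\G_c^s(X)\subset\df^{0,1}C_c^\infty(X)\). So the second claim is the restriction of the first.

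There is no real obstacle here. The only point to check is that the path can be taken piecewise \(C^1\), as Proposition~\ref{prop:transport-formula} requires. This follows by joining the endpoints with finitely many coordinate segments in charts, using a compactness argument along a continuous path.
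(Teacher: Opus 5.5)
Your proposal is correct and follows the paper's proof: you obtain a vanishing fiber from non-compactness of \(M\), apply the homogeneous transport formula along a piecewise \(C^1\) path from \(t_0\) to an arbitrary \(t\), and deduce the Gevrey case from the inclusion \(\G_c^s(X)\subset C_c^\infty(X)\). Your added remark that connectedness gives piecewise \(C^1\) paths covers a point the paper takes for granted.
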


\begin{proof}
	Let \(u\in C_c^\infty(X)\) satisfy \(\L^0u=0\). Since
	\(\pi_M(\operatorname{supp}u)\) is compact and \(M\) is non-compact,
	there exists \(t_0\in M\setminus\pi_M(\operatorname{supp}u)\).
	In particular, \(u(t_0,\cdot)=0\).
	
	Fix \(t\in M\) and choose a piecewise \(C^1\) path \(\gamma\) joining
	\(t_0\) to \(t\). Formula~\eqref{eq:homogeneous-transport} gives
	\[
	u(t,x)
	=
	u\bigl(t_0,x-A_\gamma(b)\bigr)
	=
	0.
	\]
	Since \(t\) is arbitrary, we have \(u=0\). The second assertion follows from
	the inclusion \(\G_c^s(X)\subset C_c^\infty(X)\).
\end{proof}

The same ingredients give a propagation result for Gevrey regularity.

\begin{proposition}[Propagation of Gevrey regularity]
	\label{prop:gevrey-regularity-propagation}
	Let \(u\in C^\infty(X)\) and suppose that
	\[
	f:=\L^0u\in\df^{0,1}\G^s(X).
	\]
	If there exists \(t_0\in M\) such that \(u(t_0,\cdot)\in\G^s(\T^m),\)
	then
	\[
	u\in\G^s(X).
	\]
\end{proposition}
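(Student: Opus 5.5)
The plan is to show that the set
\[
G:=\{t\in M:\ u(t,\cdot)\in\G^s(\T^m)\}
\]
is nonempty, open and closed. Connectedness of \(M\) then forces \(G=M\). Along the way we also show that \(u\) is Gevrey on \(\mathcal U\times\T^m\) for a neighborhood \(\mathcal U\) of each point of \(M\). Since Gevrey regularity is local, this yields \(u\in\G^s(X)\).

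The local statement is the following. Let \(t_1\in M\), and choose an analytic chart around \(t_1\) whose image contains an open rectangle \(Q\). Since \(\omega_k\) is closed, the Poincaré lemma on \(Q\) gives Gevrey potentials \(\phi_k\) with \(\omega_k|_Q=\mathrm d\phi_k\). These can be written explicitly as integrals of the coefficients of \(\omega_k\) along coordinate polygonal paths, so they are Gevrey of order \(s\).

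Set \(v:=\mathcal T^0_{\boldsymbol\phi}u\). By Proposition~\ref{prop:gauge-covariance},
\[
\mathrm d_t v=\mathcal T^1_{\boldsymbol\phi}f .
\]
The right-hand side lies in \(\df^{0,1}\G^s(Q\times\T^m)\), because composition with the Gevrey diffeomorphism \(\tau_{\boldsymbol\phi}\) preserves the class \(\G^s\).

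Now suppose there is some \(t^0\in Q\) with \(u(t^0,\cdot)\in\G^s(\T^m)\). Then
\[
v(t^0,\cdot)=u\bigl(t^0,\cdot+\boldsymbol\phi(t^0)\bigr)
\]
is a constant translate of \(u(t^0,\cdot)\), hence also Gevrey. Lemma~\ref{lemma:gevrey-primitive} then gives \(v\in\G^s(Q\times\T^m)\). Therefore \(u=\mathcal T^0_{-\boldsymbol\phi}v\in\G^s(Q\times\T^m)\), and in particular \(u(t,\cdot)\in\G^s(\T^m)\) for every \(t\in Q\).

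This local statement gives all three properties of \(G\). The set \(G\) is nonempty because \(t_0\in G\). It is open because if \(t_1\in G\), we may take \(t^0=t_1\), and then \(Q\subset G\). It is closed because if \(t_1\in\overline G\), the rectangle \(Q\) around \(t_1\) meets \(G\) at some point \(t^0\), and the local statement gives \(t_1\in Q\subset G\). Hence \(G=M\). Applying the local statement again around every point shows that \(u\) is Gevrey on a neighborhood of every fiber, so \(u\in\G^s(X)\).

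One could instead transport along paths using Proposition~\ref{prop:transport-formula}: \(u(t,\cdot)\) equals a translate of \(u(t_0,\cdot)\) plus an integral of \(f\). The chart-by-chart argument avoids having to control Gevrey estimates in \(t\) uniformly along a path, so it is the cleaner route.

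The main obstacle is the claim that the potentials \(\phi_k\) are Gevrey and that \(\mathcal T^1_{\boldsymbol\phi}f\) is Gevrey jointly in \((t,x)\). The potentials are handled by the same argument as Lemma~\ref{lemma:gevrey-primitive} with no \(x\)-dependence. The composition \(f\circ\tau_{\boldsymbol\phi}\) is Gevrey by the stability of \(\G^s\) under composition recalled in Section~\ref{sec:gevrey-sections}. Here the lift to \(\R^m\) is harmless, since \(f\) is \(2\pi\)-periodic in \(x\). Everything else is bookkeeping.
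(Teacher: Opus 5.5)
Your proof is correct. Its local step is exactly the second half of the paper's argument: Gevrey potentials on a coordinate rectangle, the conjugation $v=\mathcal T^0_{\boldsymbol\phi}u$, and Lemma~\ref{lemma:gevrey-primitive}.

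The difference lies in how regularity on one fiber is spread over all of $M$. The paper fixes $t\in M$ and joins $t_0$ to $t$ by a single piecewise $C^1$ path. It then uses the transport formula \eqref{eq:fiber-regularity-transport} to write $u(t,\cdot)$ as a translate of $u(t_0,\cdot)$ plus a path integral of $f$, which is Gevrey in $x$. Only after that does it apply the local normal form at $t$, with base point $t^0=t$.

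You instead exploit the fact that the lemma allows an arbitrary base point $t^0\in Q$. You then run an open--closed continuation argument on $G$, which needs only the normal form and the lemma and dispenses with Proposition~\ref{prop:transport-formula} entirely. The paper's route buys an explicit representation formula. It also reuses the transport mechanism that drives injectivity (Proposition~\ref{prop:compact-support-injectivity}) and support propagation (Proposition~\ref{prop:support-propagation-L0}), which gives the paper a unified presentation.

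One small correction to your closing comparison. The path argument does not need Gevrey estimates in $t$ along the path. It only needs bounds on $x$-derivatives of $f$ on the compact set $\gamma([a,b])\times\T^m$. These are immediate because $x$ enters the integrand only through a translation. The $t$-regularity is obtained in both approaches from Lemma~\ref{lemma:gevrey-primitive}. So the obstacle you avoid is milder than you suggest, and the choice between the two routes is mainly one of economy versus explicitness.
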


\begin{proof}
	Fix \(t\in M\), and let
	\(\gamma:[a,b]\to M\) be a piecewise \(C^1\) path from \(t_0\) to
	\(t\). Replacing \(x\) by \(x-A_\gamma(b)\) in
	\eqref{eq:transport-formula}, we obtain
	\begin{align}
		u(t,x)
		={}
		u\bigl(t_0,x-A_\gamma(b)\bigr)
		+
		\int_a^b
		f_{\left(
			\gamma(r),\,
			x-A_\gamma(b)+A_\gamma(r)
			\right)}
		\bigl(\dot\gamma(r)\bigr)\,\mathrm dr.
		\label{eq:fiber-regularity-transport}
	\end{align}
	
	The first term on the right-hand side belongs to \(\G^s(\T^m)\).
	Since \(\gamma([a,b])\) is compact, the Gevrey estimates for \(f\)
	are uniform along the path. Differentiating the integral in
	\eqref{eq:fiber-regularity-transport} with respect to \(x\), we find
	constants \(C,h>0\), depending on \(\gamma\), such that
	\[
	\sup_{x\in\T^m}
	\left|
	\partial_x^\beta u(t,x)
	\right|
	\leq
	C h^{|\beta|}\beta!^s,
	\qquad
	\beta\in\N_0^m.
	\]
	
	Hence,
	\[
	u(t,\cdot)\in\G^s(\T^m).
	\]
	
	Choose an analytic coordinate neighborhood
	\(\mathcal U\subset M\) of \(t\) whose coordinate image is an open
	rectangle. After shrinking \(\mathcal U\), the closed forms
	\(\omega_k\) admit smooth potentials
	\[
	\omega_k|_{\mathcal U}
	=
	\mathrm d\phi_k,
	\qquad k=1,\ldots,m.
	\]

	Since \(\mathrm d\phi_k\in\df^1\G^s(\mathcal U)\), the scalar case of
	Lemma~\ref{lemma:gevrey-primitive} implies that
	\(\phi_k\in\G^s(\mathcal U)\).

	Set
	\[
	\boldsymbol{\phi}
	:=
	(\phi_1,\ldots,\phi_m),
	\qquad
	v:=\mathcal T_{\boldsymbol{\phi}}^0u,
	\qquad
	g:=\mathcal T_{\boldsymbol{\phi}}^1f.
	\]
	
	By Proposition~\ref{prop:gauge-covariance},
	\[
	\mathrm d_tv
	=
	g
	\in
	\df^{0,1}\G^s(\mathcal U\times\T^m).
	\]
	
	Moreover,
	\[
	v(t,x)
	=
	u\bigl(t,x+\boldsymbol{\phi}(t)\bigr)
	\in\G^s(\T^m).
	\]
	
	Lemma~\ref{lemma:gevrey-primitive} therefore gives
	\(v\in\G^s(\mathcal U\times\T^m)\). 
	Since
	\((\mathcal T_{\boldsymbol{\phi}}^0)^{-1}\) preserves the Gevrey
	class,
	\[
	u\in\G^s(\mathcal U\times\T^m).
	\]
	
	As \(t\in M\) was arbitrary, \(u\in\G^s(X)\).
\end{proof}

\begin{corollary}\label{prop:nonconfinement-L0}
	The operator \(\L^0\) has the property of non-confinement of Gevrey
	singularities. More precisely,
	\[
	u\in C_c^\infty(X),
	\qquad
	\L^0u\in\df^{0,1}\G_c^s(X)
	\quad\Longrightarrow\quad
	u\in\G_c^s(X).
	\]
\end{corollary}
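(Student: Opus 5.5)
The plan is to reduce the corollary to Proposition~\ref{prop:gevrey-regularity-propagation}, using non-compactness of \(M\) exactly as in the proof of Proposition~\ref{prop:compact-support-injectivity}.

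Let \(u\in C_c^\infty(X)\) with \(\L^0u\in\df^{0,1}\G_c^s(X)\). Since \(\operatorname{supp}u\) is compact, its projection \(\pi_M(\operatorname{supp}u)\) is a compact subset of \(M\). Because \(M\) is non-compact, we can pick \(t_0\in M\setminus\pi_M(\operatorname{supp}u)\). Then \(u(t_0,\cdot)\equiv0\), which trivially belongs to \(\G^s(\T^m)\).

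Next we check the hypotheses of Proposition~\ref{prop:gevrey-regularity-propagation}. We have \(u\in C^\infty(X)\), and \(f:=\L^0u\) lies in \(\df^{0,1}\G_c^s(X)\subset\df^{0,1}\G^s(X)\). The connectedness of \(M\) needed for the path argument is a standing assumption. The proposition therefore gives \(u\in\G^s(X)\).

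Finally, \(u\) has compact support, so \(u\in\G^s(X)\) with compact support. Hence \(u\) lies in \(\G_c^s(K;\mathbb C)\) for a compact \(K\) containing its support, and so \(u\in\G_c^s(X)\). The last identification uses the fact that the Gevrey norms \eqref{eq:bundle-gevrey-norm} over the finitely many charts covering a compact \(K_\ell\supset\operatorname{supp}u\) are finite for some \(h\), which is the definition \eqref{eq:compact-step}.

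No step is a genuine obstacle, since all the analytic work sits in Proposition~\ref{prop:gevrey-regularity-propagation}. The only points that need care are the use of non-compactness to produce a vanishing fiber, and the routine identification of compactly supported elements of \(\G^s(X)\) with \(\G_c^s(X)\).
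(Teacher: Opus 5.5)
Your proposal is correct and follows the paper's proof: you use non-compactness of \(M\) to find a fiber \(\{t_0\}\times\T^m\) on which \(u\) vanishes, apply Proposition~\ref{prop:gevrey-regularity-propagation} to get \(u\in\G^s(X)\), and then use compact support to conclude \(u\in\G_c^s(X)\). Your extra remark on identifying compactly supported elements of \(\G^s(X)\) with \(\G_c^s(X)\) is fine. Over the finitely many charts you can take the largest of the admissible parameters \(h\), since the norms \eqref{eq:local-gevrey-norm} decrease as \(h\) increases.
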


\begin{proof}
	Since \(M\) is non-compact and
	\(\pi_M(\operatorname{supp}u)\) is compact, there exists
	\[
	t_0\in M\setminus\pi_M(\operatorname{supp}u).
	\]
	
	Thus \(u(t_0,\cdot)=0\), and
	Proposition~\ref{prop:gevrey-regularity-propagation} gives
	\(u\in\G^s(X)\). Since \(u\) has compact support,
	\(u\in\G_c^s(X)\).
\end{proof}

\begin{remark}\label{rem:noncompactness-seed}
	The two geometric hypotheses play different roles in the preceding
	argument. The non-compactness of \(M\) provides a fiber on which a
	compactly supported function vanishes, while the closedness of the
	forms \(\omega_k\) provides local Gevrey potentials and hence the
	normal form \eqref{eq:local-normal-form}. No arithmetic property of
	the periods of the forms is used.
\end{remark}

\begin{corollary}\label{cor:semiglobal-solvability}
	The operator
	\[
	\L^{n-1}:
	\df^{0,n-1}\D_s'(X)
	\longrightarrow
	\df^{0,n}\D_s'(X)
	\]
	is semiglobally solvable in Gevrey ultradistributions.
\end{corollary}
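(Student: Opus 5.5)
We will apply the first half of the argument behind Proposition~\ref{prop:abstract-solvability}, namely \cite[Theorem~4.16]{Araujo2017}, with \(P=\L^{n-1}\), \(\EE=\df^{0,n-1}X\) and \(\FF=\df^{0,n}X\). That theorem needs only two inputs: injectivity of \({}^tP\) on compactly supported Gevrey sections, and non-confinement of Gevrey singularities for \({}^tP\). The first step is therefore to identify the transpose. By \eqref{eq:transpose-Lq}, \({}^t\L^{n-1}=(-1)^n\L^0\).

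Here the duality is realized through the wedge pairing \(\langle u,v\rangle\) instead of a density and fiberwise pairing. We will note that, since \(M\) is oriented, \(\df^{0,n}X\) is trivialized by the pullback of an analytic volume form. In particular \((\df^{0,n}X)^*\) is identified with the trivial line bundle, and \((\df^{0,n-1}X)^*\) with \(\df^{0,1}X\) via wedge product. Under these identifications the abstract transpose \({}^tP\) acting on \(\G_c^s(X;\FF^*)\) becomes \(\pm\L^0:\G_c^s(X)\to\df^{0,1}\G_c^s(X)\). The signs and the analytic change of density are harmless, since multiplication by a positive analytic function is an automorphism of all Gevrey spaces and commutes with supports.

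With this identification, hypothesis (i) of the criterion is exactly Proposition~\ref{prop:compact-support-injectivity}. Hypothesis (ii) is exactly Corollary~\ref{prop:nonconfinement-L0}, since non-confinement is insensitive to multiplying the operator by \(\pm1\). Invoking \cite[Theorem~4.16]{Araujo2017} then gives the following: for every \(f\in\df^{0,n}\D_s'(X)\) and every \(\mathcal W\Subset X\), there is \(u\in\df^{0,n-1}\D_s'(\mathcal W)\) with \(\L^{n-1}u=f|_{\mathcal W}\). This is the claim.

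The only point requiring care, and the main obstacle, is the bookkeeping of the identification between the wedge pairing used in \eqref{eq:integration-by-parts-Lq} and the density-based transpose \eqref{eq:formal-transpose} used in the abstract framework. We also have to check that the ultradistributional action of \(\L^{n-1}\) defined by transposition coincides with the one used in \cite{Araujo2017}. We would handle this by fixing \(\mu=\mathrm{vol}_M\otimes\mathrm dx\) with \(\mathrm{vol}_M\) a positive analytic volume form compatible with the orientation, and checking \eqref{eq:formal-transpose} directly from \eqref{eq:integration-by-parts-Lq}.
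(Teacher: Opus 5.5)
Your proposal is correct and follows the paper's proof: you identify ${}^t\L^{n-1}=(-1)^n\L^0$ via \eqref{eq:transpose-Lq}, take injectivity from Proposition~\ref{prop:compact-support-injectivity} and non-confinement from Corollary~\ref{prop:nonconfinement-L0}, and invoke \cite[Theorem~4.16]{Araujo2017}. Your additional bookkeeping, which reconciles the wedge pairing with the density-based transpose by choosing $\mu=\mathrm{vol}_M\otimes\mathrm dx$, is left implicit in the paper and is handled correctly.
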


\begin{proof}
	By \eqref{eq:transpose-Lq},
	\[
	{}^t\L^{n-1}
	=
	(-1)^n\L^0.
	\]
	Proposition~\ref{prop:compact-support-injectivity} gives the
	injectivity of the transpose on compactly supported Gevrey functions,
	while Corollary~\ref{prop:nonconfinement-L0} gives the
	non-confinement of Gevrey singularities. The conclusion follows from
	\cite[Theorem~4.16]{Araujo2017}.
\end{proof}

\subsection{Propagation of supports} \

To pass from semiglobal to global solvability, it remains to verify the
support condition in Proposition~\ref{prop:abstract-solvability}. We first
construct a compact neighborhood with no relatively compact complementary
components.

\begin{lemma}\label{lemma:regular-compact-neighborhood}
	For every compact set \(K\subset M\), there exists a compact set
	\(K^\sharp\subset M\) such that
	\[
	K\subset\operatorname{int}K^\sharp
	\]
	and every connected component of \(M\setminus K^\sharp\) is
	non-compact.
\end{lemma}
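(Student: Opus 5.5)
The approach is the classical "filling in holes" construction. Given a compact set \(K\subset M\), I will first enlarge \(K\) to a compact set \(L\) with \(K\subset\operatorname{int}L\). One way is to cover \(K\) by finitely many relatively compact coordinate balls and let \(L\) be the union of their closures. Alternatively, take \(L=K_\ell\) from a compact exhaustion with \(K\subset\operatorname{int}K_\ell\). I will then define
\[
K^\sharp := L\cup\bigcup\{C : C \text{ a connected component of } M\setminus L \text{ with compact closure}\}.
\]
This gives \(K\subset\operatorname{int}L\subset\operatorname{int}K^\sharp\) at once. The remaining work is to show that \(K^\sharp\) is compact and that every component of \(M\setminus K^\sharp\) is non-compact.

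For compactness, the key point is that only finitely many components of \(M\setminus L\) meet a fixed compact neighborhood of \(L\). Choose a compact set \(L'\) with \(L\subset\operatorname{int}L'\); this is possible since \(M\) is locally compact. The components of \(M\setminus L\) are open, because \(M\) is locally connected. Suppose a component \(C\) has compact closure, so that it is relatively compact, and suppose \(C\not\subset L'\). Then \(C\) meets both \(\operatorname{int}L'\) and \(M\setminus L'\). This needs one small check: \(\partial C\subset L\), since \(M\) is connected and \(C\neq M\), so \(\overline C\) touches \(L\) and hence meets \(\operatorname{int}L'\). By connectedness, \(C\) then meets the compact set \(\partial L'\subset M\setminus L\). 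The components of \(M\setminus L\) form an open cover of \(\partial L'\) by pairwise disjoint sets, so only finitely many of them meet \(\partial L'\). It follows that all but finitely many relatively compact components lie inside \(L'\). Hence \(K^\sharp\) is contained in the union of \(L'\) with finitely many compact closures \(\overline C\). Moreover, \(K^\sharp\) is closed, because its complement is the union of the remaining components, which are open. So \(K^\sharp\) is compact.

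For the second property, I need to understand the components of \(M\setminus K^\sharp\). As a set, \(M\setminus K^\sharp\) is the union of those components of \(M\setminus L\) that are not relatively compact. These are open, connected and pairwise disjoint, so they are exactly the components of \(M\setminus K^\sharp\). Each of them is non-compact: a component is relatively compact iff its closure is compact, and an open nonempty subset of a connected \(M\) that is also compact would be clopen and equal to \(M\), which is impossible since \(M\) is non-compact. Thus none of these components is compact, and none is even relatively compact, which is the stronger statement used in the support arguments.

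The main obstacle is the finiteness argument in the second paragraph, in particular the check that every relatively compact component approaches \(L\). That check uses the connectedness of \(M\) to rule out a component that is closed in \(M\). Everything else is point-set bookkeeping.
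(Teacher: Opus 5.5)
Your proof is correct, and it takes a different route from the paper's.

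The paper picks a relatively compact connected open neighborhood $\Omega$ of $K$ with smooth boundary. It uses a collar of $\partial\Omega$, which has finitely many components, to see that $M\setminus\overline{\Omega}$ has only finitely many components, and then adjoins the closures of the relatively compact ones.

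You start instead from an arbitrary compact neighborhood $L$ and allow $M\setminus L$ to have infinitely many components. You get compactness of $K^\sharp$ from the observation that a relatively compact component not contained in $L'$ must meet the compact set $\partial L'\subset M\setminus L$. Only finitely many of the pairwise disjoint open components can do that.

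Your argument is purely point-set: it works in any connected, locally connected, locally compact Hausdorff space. It dispenses with the differential-topological input the paper takes for granted, namely the existence of a connected smooth domain around $K$ and the collar theorem. The paper's route yields one additional fact, not used later: $M\setminus K^\sharp$ has finitely many components.

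Your remark about the statement itself is accurate. ``Non-compact'' holds trivially for every nonempty open subset of the connected non-compact $M$, and Proposition~\ref{prop:support-propagation-L0} actually needs each component of $M\setminus K^\sharp$ to be not relatively compact. Both constructions deliver this stronger property.

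One minor expository point: $\partial C\subset L$ holds for any component $C$ of the open set $M\setminus L$ by local connectedness alone. Connectedness of $M$ is what gives $\partial C\neq\emptyset$.
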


\begin{proof}
	Choose a relatively compact connected open neighborhood
	\(\Omega\Subset M\) of \(K\) with smooth boundary. Since
	\(\partial\Omega\) is compact, it has finitely many connected
	components. Moreover, every connected component of
	\(M\setminus\overline\Omega\) has closure meeting
	\(\partial\Omega\), because \(M\) is connected. It follows from a
	collar neighborhood of \(\partial\Omega\) that
	\(M\setminus\overline\Omega\) has only finitely many connected
	components.
	
	Let \(\mathcal U_1,\ldots,\mathcal U_N\) be its relatively compact
	connected components and set
	\[
	K^\sharp
	:=
	\overline\Omega
	\cup
	\bigcup_{j=1}^N\overline{\mathcal U_j}.
	\]
	
	Then \(K^\sharp\) is compact and contains \(K\) in its interior. Its
	complement consists precisely of the non-relatively compact components
	of \(M\setminus\overline\Omega\), and therefore every connected
	component of \(M\setminus K^\sharp\) is non-compact.
\end{proof}

\begin{proposition}\label{prop:support-propagation-L0}
	Let \(\mathcal K\subset X\) be compact, set \(K:=\pi_M(\mathcal K)\),
	and let \(K^\sharp\) be as in
	Lemma~\ref{lemma:regular-compact-neighborhood}. If
	\(u\in C_c^\infty(X)\) and 
	\(\operatorname{supp}(\L^0u)\subset\mathcal K\), 
	then
	\begin{equation}\label{eq:support-hull}
		\operatorname{supp}u
		\subset
		K^\sharp\times\T^m.
	\end{equation}
	Consequently, \(X\) is \(\L^{n-1}\)-convex with respect to supports.
\end{proposition}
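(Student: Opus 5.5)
The strategy is to show that \(u\) vanishes on every fiber \(\{t\}\times\T^m\) with \(t\notin K^\sharp\), using the homogeneous transport formula \eqref{eq:homogeneous-transport} along paths that stay in the region where \(\L^0u=0\).

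Fix \(t\in M\setminus K^\sharp\), and let \(\mathcal C\) be the connected component of \(M\setminus K^\sharp\) containing \(t\). Since \(K^\sharp\) is compact and \(M\) is a manifold, \(\mathcal C\) is open and path-connected by piecewise \(C^1\) paths. By Lemma~\ref{lemma:regular-compact-neighborhood}, \(\mathcal C\) is non-compact, and in fact not relatively compact. Indeed, if \(\overline{\mathcal C}\) were compact, then, since \(\partial\mathcal C\subset K^\sharp\) and the construction makes \(\mathcal C\) a full component, we would contradict the statement that every component is non-compact. At this point I would check the precise meaning of "non-compact" in the lemma: its proof removes exactly the relatively compact components, so \(\mathcal C\) is not relatively compact. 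Because \(\pi_M(\operatorname{supp}u)\) is compact, \(\mathcal C\) cannot be contained in it. Hence there is \(t_0\in\mathcal C\setminus\pi_M(\operatorname{supp}u)\), and therefore \(u(t_0,\cdot)=0\).

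Next, choose a piecewise \(C^1\) path \(\gamma\) in \(\mathcal C\) from \(t_0\) to \(t\). Its image lies in \(M\setminus K^\sharp\subset M\setminus K\), so \(\gamma([a,b])\times\T^m\) is disjoint from \(\mathcal K\subset K\times\T^m\). Thus \(\L^0u=0\) there, and \eqref{eq:homogeneous-transport} gives \(u(t,x)=u(t_0,x-A_\gamma(b))=0\) for all \(x\). So \(u\) vanishes on \((M\setminus K^\sharp)\times\T^m\), which is open, and therefore \(\operatorname{supp}u\subset K^\sharp\times\T^m\). This is compact.

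For the convexity statement, note that \({}^t\L^{n-1}=(-1)^n\L^0\) by \eqref{eq:transpose-Lq}. Given a compact \(\mathcal K\subset X\), set \(K':=K^\sharp\times\T^m\). If \(\varphi\in\G_c^s(X)\subset C_c^\infty(X)\) satisfies \(\operatorname{supp}({}^t\L^{n-1}\varphi)\subset\mathcal K\), then \(\operatorname{supp}\L^0\varphi\subset\mathcal K\), and the first part gives \(\operatorname{supp}\varphi\subset K'\). This is exactly Definition~\ref{def:P-convex-supports}.

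The main obstacle is the step establishing that each component of \(M\setminus K^\sharp\) escapes every compact set, which is what produces the vanishing fiber \(t_0\). Everything else is a direct application of the transport formula.
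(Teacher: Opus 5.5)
Your proof is correct and follows the paper's argument step for step. It finds a vanishing fiber in each component of \(M\setminus K^\sharp\), applies the homogeneous transport formula \eqref{eq:homogeneous-transport} along a path inside that component (which avoids \(\mathcal K\)), and then takes \(K'=K^\sharp\times\T^m\) using \({}^t\L^{n-1}=(-1)^n\L^0\). Your point that ``non-compact'' in Lemma~\ref{lemma:regular-compact-neighborhood} must mean ``not relatively compact'' is a precision the paper uses tacitly. However, your first justification, appealing to the lemma's literal statement, is vacuous: an open component of \(M\setminus K^\sharp\) could only be compact if it were clopen in the connected, non-compact \(M\), which is impossible. The right reason is the one you fall back on, namely that the construction of \(K^\sharp\) discards exactly the relatively compact components.
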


\begin{proof}
	Let \(\mathcal U\) be a connected component of
	\(M\setminus K^\sharp\). By
	Lemma~\ref{lemma:regular-compact-neighborhood}, the set
	\(\mathcal U\) is non-compact. Since
	\(\pi_M(\operatorname{supp}u)\) is compact, there exists
	\[
	t_{\mathcal U}
	\in
	\mathcal U\setminus\pi_M(\operatorname{supp}u).
	\]
	In particular, \(u(t_{\mathcal U},\cdot)=0\).
	
	Since \(K\subset K^\sharp\), one has
	\[
	\L^0u=0
	\quad\text{on }\mathcal U\times\T^m.
	\]
	
	For any \(t\in\mathcal U\), choose a piecewise \(C^1\) path in
	\(\mathcal U\) joining \(t_{\mathcal U}\) to \(t\). The homogeneous
	transport formula \eqref{eq:homogeneous-transport} gives
	\[
	u(t,\cdot)=0.
	\]
	Thus \(u\) vanishes on
	\(\mathcal U\times\T^m\).
	
	This holds for every connected component of
	\(M\setminus K^\sharp\), and hence
	\[
	\operatorname{supp}u
	\subset
	K^\sharp\times\T^m.
	\]
	
	Now let \(P=\L^{n-1}\). By \eqref{eq:transpose-Lq},
	\[
	{}^tP=(-1)^n\L^0.
	\]
	
	For each compact set \(\mathcal K\subset X\), the set
	\[
	\mathcal K'
	:=
	K^\sharp\times\T^m
	\]
	is compact and, by the estimate just proved,
	\[
	\varphi\in\G_c^s(X),
	\qquad
	\operatorname{supp}({}^tP\varphi)\subset\mathcal K
	\quad\Longrightarrow\quad
	\operatorname{supp}\varphi\subset\mathcal K'.
	\]
	
	Therefore, \(X\) is \(P\)-convex with respect to supports.
\end{proof}

\subsection{Global solvability in top degree} \

The preceding results yield the main theorem.

\begin{theorem}\label{thm:global-solvability-top-degree}
	Let \(s>1\), let \(M\) be a connected, non-compact, oriented
	real-analytic manifold of dimension \(n\), and let
	\(\omega_1,\ldots,\omega_m\in\df^1\G^s(M)\)
	be real-valued closed \(1\)-forms. Then
	\begin{equation}\label{eq:global-solvability-top}
		\L^{n-1}:
		\df^{0,n-1}\D_s'(M\times\T^m)
		\longrightarrow
		\df^{0,n}\D_s'(M\times\T^m)
	\end{equation}
	is globally solvable in Gevrey ultradistributions.
\end{theorem}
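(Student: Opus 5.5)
The plan is to apply the abstract criterion, Proposition~\ref{prop:abstract-solvability}, with \(X=M\times\T^m\), \(\EE=\df^{0,n-1}X\), \(\FF=\df^{0,n}X\) and \(P=\L^{n-1}\). First we check the standing hypotheses of that subsection. \(X\) is non-compact because \(M\) is. \(P\) is a differential operator with coefficients of Gevrey order \(s\), so it is continuous and local on \(\D_s'\), and its transpose acts continuously on compactly supported Gevrey sections over every open subset. We use the pairing \(\langle u,v\rangle\) of Section~\ref{sec:tube-complex} (wedge product on \(M\) together with Haar measure on \(\T^m\)) to identify \(\FF^*=(\df^{0,n}X)^*\) with \(\df^{0,0}X\) and \(\EE^*\) with \(\df^{0,1}X\), via the orientation of \(M\). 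With these identifications, \eqref{eq:transpose-Lq} gives \({}^tP=(-1)^n\L^0\). The sign is harmless for injectivity, for regularity, and for supports.

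Next we verify the three hypotheses in turn, each by a result already proved. Hypothesis (i) is injectivity of \({}^tP\) on \(\G_c^s(X)\), which is Proposition~\ref{prop:compact-support-injectivity}. Hypothesis (ii) is non-confinement of Gevrey singularities: if \(u\in C_c^\infty(X)\) and \(\L^0u\in\df^{0,1}\G_c^s(X)\), then \(u\in\G_c^s(X)\). This is exactly Corollary~\ref{prop:nonconfinement-L0}. Hypothesis (iii), \(P\)-convexity of \(X\) with respect to supports, is the second assertion of Proposition~\ref{prop:support-propagation-L0}. That proposition takes \(\mathcal K\subset X\) compact and \(\mathcal K'=K^\sharp\times\T^m\) with \(K=\pi_M(\mathcal K)\), using Lemma~\ref{lemma:regular-compact-neighborhood}. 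Proposition~\ref{prop:abstract-solvability} then yields surjectivity of \eqref{eq:global-solvability-top}. Equivalently, Corollary~\ref{cor:semiglobal-solvability} gives semiglobal solvability, and \cite[Theorem~4.20]{Araujo2017} promotes it to global solvability.

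The only delicate step is the bookkeeping of dualities. Proposition~\ref{prop:abstract-solvability} is formulated with a fixed analytic density \(\mu\) and fiberwise pairings, whereas \eqref{eq:transpose-Lq} uses the wedge pairing. We would check that the two transposes agree up to a bundle isomorphism with analytic coefficients. Concretely, write \(\mu=\rho\,\mathrm{vol}\), with \(\rho>0\) analytic. Top forms on \(M\) are then identified with functions by dividing by \(\rho\) times the oriented volume. Such an identification preserves Gevrey classes, supports and injectivity, so the three hypotheses transfer unchanged. This identification is justified by Remark~\ref{rem:density-bundle}. Once it is in place, the proof is a direct assembly of the earlier results.
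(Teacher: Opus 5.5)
Your proposal is correct and follows the paper's proof: apply Proposition~\ref{prop:abstract-solvability} to \(P=\L^{n-1}\) with \({}^tP=(-1)^n\L^0\), and take the three hypotheses from Proposition~\ref{prop:compact-support-injectivity}, Corollary~\ref{prop:nonconfinement-L0} and Proposition~\ref{prop:support-propagation-L0}. Your extra remark matching the wedge pairing behind \eqref{eq:transpose-Lq} with the density-based transpose is harmless bookkeeping that the paper leaves implicit via Remark~\ref{rem:density-bundle}.
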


\begin{proof}
	Apply Proposition~\ref{prop:abstract-solvability} to
	\[
	P=\L^{n-1}.
	\]
	By \eqref{eq:transpose-Lq},
	\[
	{}^tP=(-1)^n\L^0.
	\]
	Proposition~\ref{prop:compact-support-injectivity} gives the
	injectivity of \({}^tP\) on compactly supported Gevrey functions,
	Corollary~\ref{prop:nonconfinement-L0} gives the non-confinement of
	Gevrey singularities, and
	Proposition~\ref{prop:support-propagation-L0} gives
	\(P\)-convexity with respect to supports. Hence all the hypotheses of
	Proposition~\ref{prop:abstract-solvability} are satisfied, and the
	conclusion follows.
\end{proof}

\begin{remark}[Regularity versus solvability]
	\label{rem:regularity-vs-solvability}
	Theorem~\ref{thm:global-solvability-top-degree} imposes no arithmetic
	condition on the periods of the forms
	\(\omega_1,\ldots,\omega_m\). Such conditions may govern global
	Gevrey hypoellipticity of \(\L^0\), but they do not obstruct the
	global solvability of \(\L^{n-1}\) in Gevrey ultradistributions.
	This contrasts with the compact setting discussed in the next
	subsection.
\end{remark}

\begin{remark}[Why top degree is special]\label{rem:top-degree}
	For \(q<n-1\), the complex relation
	\[
	\L^{q+1}\L^q=0
	\]
	implies that
	\[
	\operatorname{Ran}\L^q
	\subset
	\ker\L^{q+1}.
	\]
	
	Thus, in intermediate degree, one cannot expect \(\L^q\) to be
	surjective onto the entire space of \((0,q+1)\)-forms. The natural
	question is whether every \(\L^{q+1}\)-closed datum belongs to the
	range of \(\L^q\), or equivalently whether
	\[
	H_{\L,s}^{q+1}(X)
	:=
	\frac{
		\ker\!\left(
		\L^{q+1}:
		\df^{0,q+1}\D_s'(X)
		\longrightarrow
		\df^{0,q+2}\D_s'(X)
		\right)
	}{
		\operatorname{Ran}\!\left(
		\L^q:
		\df^{0,q}\D_s'(X)
		\longrightarrow
		\df^{0,q+1}\D_s'(X)
		\right)
	}
	\]
	vanishes. In top degree, the differential compatibility condition is
	automatic because \(\L^n=0\). Therefore,
	Theorem~\ref{thm:global-solvability-top-degree} is equivalently the
	statement
	\[
	H_{\L,s}^{n}(X)=0.
	\]
\end{remark}

\subsection{A comparison with the compact case} \

For comparison, assume throughout this subsection that \(M\) is compact
and that \(\omega_1,\ldots,\omega_m\) are real-analytic, while retaining
the remaining hypotheses and notation introduced above.
Then \(X=M\times\T^m\) is compact and
\[
\G_c^s(X)=\G^s(X).
\]

The constant function \(1\) belongs to \(\ker\L^0\). Moreover, for every
\(u\in\df^{0,n-1}\D_s'(X)\),
\[
\bigl\langle \L^{n-1}u,1\bigr\rangle
=
(-1)^n\bigl\langle u,\L^01\bigr\rangle
=
0.
\]

Thus, unlike the non-compact case, \(\L^{n-1}\) cannot be surjective onto
the whole space \(\df^{0,n}\D_s'(X)\). One must restrict the right-hand
side to the annihilator of \(\ker\L^0\).

Following \cite[Definition~1.1]{ADL2023gs}, we adopt the following
compact-case notion of solvability.

\begin{definition}\label{def:compact-global-solvability}
	We say that
	\[
	\L^{n-1}:
	\df^{0,n-1}\D_s'(X)
	\longrightarrow
	\df^{0,n}\D_s'(X)
	\]
	is \emph{globally \((\D_s',\D_s')\)-solvable} if, for every
	\(f\in\df^{0,n}\D_s'(X)\) satisfying
	\[
	\langle f,\varphi\rangle=0, \quad \forall \varphi\in \G^s(M\times\T^m)\cap\ker\L^0,
	\]
	there exists \(u\in\df^{0,n-1}\D_s'(X)\) such that
	\[
	\L^{n-1}u=f.
	\]
\end{definition}

This notion differs from the unrestricted global solvability considered
in Theorem~\ref{thm:global-solvability-top-degree}, since it incorporates
the compatibility conditions forced by the kernel of the transpose.

\begin{proposition}\label{prop:compact-closed-range}
	The following statements are equivalent:
	\begin{enumerate}[(i)]
		\item
		\(\L^{n-1}\) is globally
		\((\D_s',\D_s')\)-solvable;
		
		\item
		\(\L^{n-1}: \df^{0,n-1}\D_s'(X) \longrightarrow \df^{0,n}\D_s'(X)\)
		has closed range;
		
		\item
		\(\L^0: \G^s(X) \longrightarrow \df^{0,1}\G^s(X)\)
		has closed range.
	\end{enumerate}
\end{proposition}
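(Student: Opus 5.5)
We will prove the equivalences by closed-range duality for Fréchet--Schwartz and DFS spaces. In the compact case, \(\G^s(X)\) and \(\df^{0,1}\G^s(X)\) are DFS spaces, \(\df^{0,q}\D_s'(X)\) are their strong duals (FS spaces), and by \eqref{eq:transpose-Lq} the operator \(\L^{n-1}\) on ultradistributions is, up to the sign \((-1)^n\), the transpose of
\[
\L^0:\G^s(X)\longrightarrow\df^{0,1}\G^s(X).
\]

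\emph{(ii)\(\Leftrightarrow\)(iii).} We invoke the closed range theorem for continuous linear maps between reflexive Fréchet or DFS spaces; see Komatsu \cite{Komatsu1967}. In the Pták/Banach form, a continuous linear map \(T:E\to F\) between such spaces has closed range if and only if \({}^tT\) has weak-\(*\) closed range. For DFS/FS spaces, which are reflexive Montel spaces, weak-\(*\) closedness and closedness in the strong dual coincide for subspaces, because strong and weak-\(*\) closed convex sets agree in reflexive spaces. This gives the equivalence. The main point to check is that this version of the theorem holds for the pair (DFS, FS). We would cite \cite[Section~1]{Komatsu1967} or the standard result that a continuous linear map between Fréchet spaces has closed range iff its transpose does. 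Applying this to \(\L^{n-1}\) between FS spaces, whose transpose is \(\pm\L^0\) between the DFS preduals, and using reflexivity to identify the bidual, gives (ii)\(\Leftrightarrow\)(iii).

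\emph{(i)\(\Leftrightarrow\)(ii).} The closed range theorem also identifies \(\overline{\operatorname{Ran}\L^{n-1}}\) with the annihilator of \(\ker{}^t\L^{n-1}=\ker\L^0\cap\G^s(X)\). Indeed, by Hahn--Banach in the reflexive space \(\df^{0,n}\D_s'(X)\), whose dual is \(\G^s(X)\), the closure of the range is exactly
\[
\{f:\langle f,\varphi\rangle=0\ \text{for all}\ \varphi\in\ker\L^0\}.
\]
So (i) says that \(\operatorname{Ran}\L^{n-1}\) equals this annihilator, that is, equals its closure. This is precisely (ii), and the converse is the same identity read backwards.

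The only delicate step is the functional-analytic justification of closed-range duality in the DFS/FS setting. This includes the identification \((\df^{0,q}\D_s'(X))'=\df^{0,q}\G^s(X)\) via reflexivity. Everything else is formal from \eqref{eq:transpose-Lq}.
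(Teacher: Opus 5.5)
Your proposal is correct and follows the same route as the paper: you get (i)$\Leftrightarrow$(ii) from the identity $\overline{\operatorname{Ran}\L^{n-1}}=(\ker\L^0)^\perp$, via Hahn--Banach and the reflexive identification of the dual, and (ii)$\Leftrightarrow$(iii) from the closed-range theorem for the DFS--FS dual pair. The only difference is that you spell out the Fr\'echet closed-range theorem together with the coincidence of weakly and strongly closed subspaces of the reflexive space $\df^{0,1}\G^s(X)$, which the paper simply cites from \cite{ADL2023gs}.
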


\begin{proof}
	In the present DFS--FS duality, the transpose relation
	\eqref{eq:transpose-Lq} gives
	\[
	\overline{\operatorname{Ran}\L^{n-1}}
	=
	(\ker\L^0)^\perp,
	\]
	where the closure is taken in the strong topology of
	\(\df^{0,n}\D_s'(X)\). Hence, global
	\((\D_s',\D_s')\)-solvability is equivalent to the closedness of the
	range of \(\L^{n-1}\).
	
	Since \(\L^{n-1}\) is, up to sign, the strong transpose of \(\L^0\),
	the equivalence between \emph{(ii)} and \emph{(iii)} follows from the
	closed-range theorem for the corresponding DFS--FS dual pair; see
	\cite[Section~3, in particular (3.3)--(3.4)]{ADL2023gs}.
\end{proof}

We now recall the arithmetic condition that characterizes this
closed-range property. A real closed \(1\)-form
\(\alpha\in\df^1\G^s(M)\) is said to be \emph{integral} if
\[
\frac{1}{2\pi}\int_\sigma\alpha\in\Z
\]
for every \(1\)-cycle \(\sigma\) in \(M\).

Given \(\boldsymbol{\omega} = (\omega_1,\ldots,\omega_m),\)
set
\[
\Gamma_{\boldsymbol{\omega}}
:=
\left\{
\xi\in\Z^m:
\xi\cdot\boldsymbol{\omega}
\text{ is not integral}
\right\}.
\]

\begin{definition}\label{def:relative-exponential-liouville}
	We say that \(\boldsymbol{\omega}\) is
	\emph{\((s,\Gamma_{\boldsymbol{\omega}})\)-exponential Liouville} if there exist
	\(\varepsilon>0\), a sequence
	\[
	\{\theta_j\}_{j\in\N}
	\subset\df^1\G^s(M;\R)
	\]
	of integral \(1\)-forms, and a sequence
	\[
	\{\xi^{(j)}\}_{j\in\N}
	\subset \Gamma_{\boldsymbol{\omega}}
	\]
	with \(|\xi^{(j)}|\to\infty\), such that
	\[
	\left\{
	e^{\varepsilon|\xi^{(j)}|^{1/s}}
	\bigl(
	\xi^{(j)}\cdot\boldsymbol{\omega}
	-
	\theta_j
	\bigr)
	\right\}_{j\in\N}
	\]
	is bounded in \(\df^1\G^s(M)\).
\end{definition}

The following characterization is
\cite[Theorem~1.3]{ADL2023gs}.

\begin{theorem}\label{thm:compact-arithmetic-characterization}
	The operator \(\L^{n-1}\) is globally
	\((\D_s',\D_s')\)-solvable if and only if
	\(\boldsymbol{\omega}\) is not
	\((s,\Gamma_{\boldsymbol{\omega}})\)-exponential Liouville.
\end{theorem}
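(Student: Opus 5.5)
By Proposition~\ref{prop:compact-closed-range}, it suffices to show that \(\L^0:\G^s(X)\to\df^{0,1}\G^s(X)\) has closed range if and only if \(\boldsymbol{\omega}\) is not \((s,\Gamma_{\boldsymbol{\omega}})\)-exponential Liouville. I would work with partial Fourier series in the torus variable. Write \(u=\sum_{\xi\in\Z^m}\widehat u_\xi(t)e^{i\xi\cdot x}\). Then \(\L^0\) decouples into the twisted differentials
\[
\mathrm d_\xi:=\mathrm d_t+i\,\xi\cdot\boldsymbol{\omega}
\]
on \(M\). Gevrey regularity of \(u\) on the compact manifold \(X\) is equivalent to two conditions: the family \(e^{\varepsilon|\xi|^{1/s}}\widehat u_\xi\) is bounded in some fixed Gevrey class \(\G^{s,h}(M)\), and the decay rate \(\varepsilon>0\) can be taken uniform in \(\xi\).

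For the direction ``not Liouville implies closed range'', I would treat the two kinds of frequencies separately.

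\emph{Integral frequencies.} If \(\xi\notin\Gamma_{\boldsymbol\omega}\), then \(\xi\cdot\boldsymbol\omega\) is integral. So \(e^{i\Psi_\xi}\) is a well-defined Gevrey function on \(M\) with \(\mathrm d\Psi_\xi=\xi\cdot\boldsymbol\omega\), obtained by integrating along paths as in the transport formula. Conjugating by it gives \(\mathrm d_\xi=e^{-i\Psi_\xi}\,\mathrm d_t\,e^{i\Psi_\xi}\), the analogue of the local normal form in Proposition~\ref{prop:gauge-covariance}. The range of \(\mathrm d_t\) on \(\G^s(M)\) is closed by Hodge theory with an analytic metric. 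One must still control the Gevrey growth of \(e^{i\Psi_\xi}\) in \(\xi\). I expect this to be mild: the loss is of order \(h^{|\alpha|}|\xi|^{|\alpha|}\), which the factor \(e^{-\varepsilon|\xi|^{1/s}}\) absorbs after shrinking \(\varepsilon\).

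\emph{Non-integral frequencies.} If \(\xi\in\Gamma_{\boldsymbol\omega}\), I would decompose
\[
\xi\cdot\boldsymbol\omega=\theta_\xi+\mathrm d\psi_\xi+\eta_\xi,
\]
with \(\theta_\xi\) integral, \(\psi_\xi\) Gevrey, and \(\eta_\xi\) harmonic and chosen to represent the distance of the class \([\xi\cdot\boldsymbol\omega]\) to the integral lattice. Conjugating away \(\theta_\xi+\mathrm d\psi_\xi\) reduces the problem to \(\mathrm d_t+i\eta_\xi\). This operator is injective on functions. The norm of its inverse on its range should be bounded by a power of \(|\xi|\) times \(|\eta_\xi|^{-1}\). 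The non-Liouville hypothesis gives, for every \(\varepsilon>0\), \(|\eta_\xi|\geq C_\varepsilon e^{-\varepsilon|\xi|^{1/s}}\). Hence each \(\widehat f_\xi\) in the closure of the range has a solution \(\widehat u_\xi\) losing at most \(e^{\varepsilon|\xi|^{1/s}}\) for every \(\varepsilon\). Summing the series then shows the range is closed.

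For the converse, suppose \(\boldsymbol\omega\) is exponential Liouville along \(\xi^{(j)}\). I would build \(f=\sum_j\widehat f_{\xi^{(j)}}e^{i\xi^{(j)}\cdot x}\) lacunarily, with each \(\widehat f_{\xi^{(j)}}\) in the range of \(\mathrm d_{\xi^{(j)}}\). The terms would be chosen so that \(f\) is Gevrey and lies in the closure of the range of \(\L^0\), while the only candidate solutions \(\widehat u_{\xi^{(j)}}\) (unique, since \(\mathrm d_\xi\) is injective on \(\Gamma_{\boldsymbol\omega}\)) are of size \(e^{\varepsilon|\xi^{(j)}|^{1/s}}\). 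That growth is incompatible with Gevrey regularity, so the range is not closed.

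The main obstacle is the quantitative inversion of \(\mathrm d_t+i\eta_\xi\) with Gevrey estimates uniform in \(\xi\), with the constant governed exactly by the distance of \([\xi\cdot\boldsymbol\omega]\) to the integral lattice. I would first try an explicit formula: integrate along a fixed finite family of paths, in the spirit of \eqref{eq:transport-formula}. Each closed loop contributes a factor \((e^{i\int_\sigma\xi\cdot\boldsymbol\omega}-1)^{-1}\), whose size is comparable to the inverse distance to \(2\pi\Z\). This approach avoids elliptic theory and makes the small-denominator dependence transparent.
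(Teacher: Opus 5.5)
The paper gives no proof of this theorem. After the reduction in Proposition~\ref{prop:compact-closed-range}, which you also use, it simply quotes \cite[Theorem~1.3]{ADL2023gs}. Your outline is therefore a self-contained reconstruction rather than a variant of an argument in the paper, and its main steps are sound. On each Fourier mode you conjugate \(\mathrm d_\xi=\mathrm d_t+i\xi\cdot\boldsymbol\omega\) either to \(\mathrm d_t\) or to \(\mathrm d_t+i\eta_\xi\), where \(\eta_\xi\) is bounded and comparable in size to the distance from \([\xi\cdot\boldsymbol\omega]\) to the integral lattice. The conjugation losses are absorbed via \(|\xi|^k e^{-\varepsilon|\xi|^{1/s}/2}\le(2s/\varepsilon)^{sk}(k!)^s\). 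The small denominator is exhibited by the identity
\[
\bigl(e^{i\int_\sigma\xi\cdot\boldsymbol\omega}-1\bigr)\widehat u_\xi(t_0)
=\int_a^b e^{i\xi\cdot A_\sigma(r)}\,(\widehat f_\xi)_{\sigma(r)}\bigl(\dot\sigma(r)\bigr)\,\mathrm dr
\]
for a loop \(\sigma\) at \(t_0\), which is just \eqref{eq:transport-formula} applied to \(\widehat u_\xi(t)e^{i\xi\cdot x}\).

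What your route buys is transparency about where compactness matters. On a non-compact base, the initial value of every mode is zero on a fiber outside the support, so no division occurs. On a compact base, that value must be recovered from the monodromy, and this is the only place the exponential Liouville condition enters.

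The citation buys brevity, but it also hides bookkeeping you would still have to write out:
\begin{enumerate}[(a)]
\item Definition~\ref{def:relative-exponential-liouville} allows arbitrary integral forms \(\theta_j\). You therefore need to obtain your lower bound on \(|\eta_\xi|\) by contradiction: take \(\theta_j:=\xi^{(j)}\cdot\boldsymbol\omega-\eta_{\xi^{(j)}}\) and use equivalence of norms on the finite-dimensional space of harmonic forms.
\item You must check that each \(\operatorname{Ran}\mathrm d_\xi\) is closed, so that \(\widehat f_\xi\) lies in it and not merely in its closure. For \(\xi\notin\Gamma_{\boldsymbol\omega}\) this follows by conjugation to \(\mathrm d_t\). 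For \(\xi\in\Gamma_{\boldsymbol\omega}\) it follows from the a priori estimate your loop formula provides.
\item In the converse, make the choice explicit. For example, take \(\widehat u_j:=e^{\varepsilon|\xi^{(j)}|^{1/s}/4}e^{-i\Theta_j}\), where \(e^{-i\Theta_j}\) is the single-valued exponential of a primitive of \(\theta_j\). Then \(\widehat f_j=\mathrm d_{\xi^{(j)}}\widehat u_j=i\,e^{\varepsilon|\xi^{(j)}|^{1/s}/4}e^{-i\Theta_j}\bigl(\xi^{(j)}\cdot\boldsymbol\omega-\theta_j\bigr)\) decays like \(e^{-\varepsilon|\xi^{(j)}|^{1/s}/4}\) in a fixed Gevrey step, while \(\widehat u_j\) is unbounded.
\end{enumerate}
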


Thus, in the compact setting, unrestricted global solvability is
impossible, while solvability for compatible data may still fail because
of exponentially small denominators. In contrast,
Theorem~\ref{thm:global-solvability-top-degree} gives unrestricted global
solvability on a non-compact base without any arithmetic condition on the
periods of the defining forms.

	\section*{Acknowledgments} 

	This study was financed in part by CAPES -- Brasil (Finance Code 001). The first author was supported in part by the Italian Ministry of the University and Research -- MUR, within the framework of the Call relating to the scrolling of the final rankings of the PRIN 2022 -- Project Code 2022HCLAZ8, CUP D53C24003370006 (PI A.~Palmieri, Local unit Sc.~Resp.~S.~Coriasco). 
	The second and third authors were supported in part by CNPq -- Brasil (grants 316850/2021-7 and 301573/2025-5, respectively).

	\bibliographystyle{plain}
	\bibliography{references}
	
\end{document}